\newtheorem{theorem}{Theorem}[section]
\newtheorem{corollary}[theorem]{Corollary}
\newtheorem{lemma}[theorem]{Lemma}
\newtheorem{question}[theorem]{Question}
\newtheorem{remark}[theorem]{Remark}
\newtheorem{example}[theorem]{Example}
\newtheorem{definition}[theorem]{Definition}
\newtheorem{observation}[theorem]{Observation}
\def\cprime{$'$}
\def\cc {{\mathfrak c}}
\def\RR {{\mathbb R}}
\def\sA {{\mathcal A}}
\def\sB {{\mathcal B}}
\def\sC {{\mathcal C}}
\def\sD {{\mathcal D}}
\def\sE {{\mathcal E}}
\def\sO {{\mathcal O}}
\def\sP {{\mathcal P}}
\def\sU {{\mathcal U}}
\def\sV {{\mathcal V}}
\def\sW {{\mathcal W}}
\def\min {\mathrm{min}}
\def\sup {\mathrm{sup}}
\def\< {{\langle}}
\def\> {{\rangle}}
\def\dot {\mathrm{dot}}
\def\st {\mathrm{st}}
\def\ot {\mathrm{ot}}
\begin{document}

\title{Regular $G_\delta$-diagonals and  some  upper  bounds  for
cardinality of topological spaces}

\author{I.\,S. Gotchev$^{1,2}$\\
Department of Mathematical Sciences,\\
Central Connecticut State University,\\ 
1615 Stanley Street,\\ 
New Britain, CT 06050, USA\\
E-mail: gotchevi@ccsu.edu
\and
M.\,G. Tkachenko$^2$\\
Departamento de Matem\'aticas,\\ 
Universidad Aut\'onoma Metropolitana,\\ 
Av. San Rafael Atlixco 186,\\ 
Col. Vicentina-Iztapalapa, 09340,\\
Mexico City, Mexico\\
E-mail: mich@xanum.uam.mx
\and
V.\,V. Tkachuk$^2$\\
Departamento de Matem\'aticas,\\ 
Universidad Aut\'onoma Metropolitana,\\ 
Av. San Rafael Atlixco 186,\\ 
Col. Vicentina-Iztapalapa, 09340,\\
Mexico City, Mexico\\
E-mail: vova@xanum.uam.mx}

\date{February 23, 2016}

\maketitle

\renewcommand{\thefootnote}{}

\footnote{2010 \emph{Mathematics Subject Classification}: Primary
54A25; Secondary 54D10, 54D20}

\footnote{\emph{Key  words  and  phrases}:   Cardinal   function,
regular  diagonal,  weakly  Lindel\"of  number, almost Lindel\"of
number, o-tightness, dense o-tightness, $\pi$-character, $\pi$-weight.}

\footnote{$^1$ The first  author  expresses  his gratitude to the
Mathematics   Department   at    the    Universidad    Aut\'onoma
Metropolitana,  Mexico  City,  Mexico,  for their hospitality and
support during his sabbatical visit of UAM in the spring semester
of 2015.}

\footnote{$^2$    Research    supported    by    CONACyT    grant
CB-2012-01-178103 (Mexico)}

\renewcommand{\thefootnote}{\arabic{footnote}}
\setcounter{footnote}{0}

\begin{abstract}
We  prove   that,   under   CH,   any   space   with   a  regular
$G_\delta$-diagonal  and  caliber  $\omega_1$  is  separable;   a
corollary  of  this  result  answers,  under  CH,  a  question of
Buzyakova. For any Urysohn space $X$, we establish the inequality
$|X|\le  wL(X)^{s\Delta_2(X)\cdot{\dot(X)}}$  which  represents a
generalization of a theorem of Basile, Bella, and Ridderbos.   We
also   show   that   if   $X$   is   a   Hausdorff   space,  then
$|X|\le(\pi\chi(X)\cdot    d(X))^{\ot(X)\cdot\psi_c(X)}$;    this
result       implies       \v{S}apirovski{\u\i}'s      inequality
$|X|\le\pi\chi(X)^{c(X)\cdot\psi(X)}$  which   only   holds   for
regular    spaces.    It    is    also    proved   that   $|X|\le
\pi\chi(X)^{\ot(X)\cdot\psi_c(X)\cdot aL_c(X)}$ for any Hausdorff
space $X$;  this  gives  one  more  generalization  of the famous
Arhangel$^\prime$skii's  inequality   $|X|\le 2^{\chi(X)\cdot  L(X)}$.
\end{abstract}

\section{Introduction}
If  a  space  has  a  $G_\delta$-diagonal, then there are notable
restrictions on its cardinal characteristics.  For example, if $X$ is a
regular Lindel\"of space with a $G_\delta$-diagonal, then it  has
a  weaker  second  countable topology. It is a result of Ginsburg
and Woods  \cite{GinWoo77},  that $|X|\le 2^{e(X)\cdot\Delta(X)}$
for any $T_1$-space $X$ and hence $|X| \le \mathfrak{c}$ whenever
a  $T_1$-space  $X$  with  a  $G_\delta$-diagonal  has  countable
extent. However, there are Tychonoff spaces  $X$  of  arbitrarily
large  cardinality  such  that $c(X)\cdot\Delta(X) = \omega$ (see
\cite{Sha84}). In particular, having $G_\delta$-diagonal and weak
Lindel\"of  property  does  not  restrict  the  cardinality  of   a
Tychonoff space.

The  situation  changes drastically if we assume that a space $X$
has a \emph{regular $G_\delta$-diagonal}, that is, there 
exists a countable family $\sU$ of open  neighborhoods of its 
diagonal $\Delta_X =
\{(x, x) : x  \in  X\}$  in  the  space  $X\times  X$  such  that
$\Delta_X  =\bigcap\{\overline{U}:U  \in \sU\}$. Buzyakova proved
in \cite{Buz06} that $|X|\le\mathfrak{c}$ whenever $X$ is a space
with a  regular  $G_\delta$-diagonal  such  that $c(X)\le\omega$.
Gotchev extended the result of  Buzyakova  establishing  in
\cite{Got15} that $|X|\le 2^{c(X)\cdot\overline{\Delta}(X)}$ for
any Urysohn space $X$.  It is worth mentioning that every space with 
a regular $G_\delta$-diagonal is Urysohn.

We recall that a space $X$ has a \emph{zero-set diagonal} if there 
is a continuous function $f\colon X\times X \rightarrow  \mathbb{R}$  
such that  $\Delta_X = f^{-1}(0)$. Clearly, having a zero-set diagonal
is   an   even   stronger   property   than   having   a  regular
$G_\delta$-diagonal.  In    \cite{Buz05}   Buzyakova
established that countable extent of $X \times X$ together with a
zero-set  diagonal  of  $X$ imply that $X$ is submetrizable and
asked whether any  space  with  a  zero-set  diagonal and caliber
$\omega_1$ also must be submetrizable. In this  paper we prove
that  the answer to this question is positive under the Continuum
Hypothesis (see Corollary~\ref{Cor:T}).

In \cite{BBR14} Basile, Bella and  Ridderbos proved that $|X| \le
wL(X)^{\pi\chi(X)}$  if  $X$  is  a  space  with  a  strong  rank
2-diagonal. Here, in Theorem~\ref{GT}, we generalize their result  
by  showing that  the  inequality $|X|\le wL(X)^{s\Delta_2(X)
\cdot{\dot(X)}}$ is true for any Urysohn space $X$.

In Corollary~\ref{CGT}  we   show   that   the   inequality
$|X|\le(\pi\chi(X)\cdot  d(X))^{\ot(X)\cdot\psi_c(X)}$  is   true
whenever  $X$  is  a  Hausdorff  space.  This  result implies 
immediately that for any Hausdorff space $X$ we have $|X|\le\pi
w(X)^{\ot(X)\cdot\psi_c(X)}$.   Together   with  Charlesworth's
inequality $d(X)\le\pi\chi(X)^{c(X)}$ which is valid for  regular
spaces, our result also implies \v{S}apirovski{\u\i}'s inequality
$|X|\le\pi\chi(X)^{c(X)\cdot\psi(X)}$  which  is known to be true
for any regular space $X$. We also generalize a result of Willard
and Dissanayake; they  proved  in  \cite{WilDis84}  that $|X| \le
2^{t(X)\cdot\psi_c(X)\cdot\pi\chi(X)\cdot   aL_c(X)}$   for   any
Hausdorff space $X$. We strengthen their  inequality  by  proving
in Theorem~\ref{TGT2} that  the  same  separation axiom in $X$ 
guarantees that $|X| \le\pi\chi(X)^{\ot(X)\cdot\psi_c(X)\cdot aL_c(X)}$.   
It is worth mentioning that the theorem of Willard and Dissanayake
generalizes the famous theorem of Arhangel'skii which states that
$|X|  \le  2^{\chi(X)\cdot  L(X)}$  whenever  $X$  is a Hausdorff
space.

\section{Notation and terminology}
Throughout this paper $\omega$ is (the cardinality of) the set of
all  non-negative  integers,  $\xi$,   $\eta$  and  $\alpha$  are
ordinals and $\kappa$,  $\tau$,  $\mu$  and  $\nu$  are  infinite
cardinals. The cardinality of the set $X$ is denoted by $|X|$ and
$\Delta_X  =  \{(x,x)\in X^2: x\in X\}$ is the \emph{diagonal} of
$X$. If $\sU$  is  a  family  of  subsets  of  $X$, $x\in X$, and
$G\subset X$ then $\st(G,\sU) = \bigcup\{U\in  \sU: U\cap  G\neq
\emptyset\}$.  When  $G=\{x\}$  we  write $\st(x,\sU)$ instead of
$\st(\{x\},\sU)$.          If          $n\in\omega$,         then
$\st^n(G,\sU)=\st(\st^{n-1}(G,\sU),\sU)$ and $\st^0(G,\sU)=G$.

All spaces are assumed to  be  topological  $T_1$-spaces.  For  a
subset $U$ of a space $X$ the closure of $U$ in $X$ is denoted by
$\overline{U}$.   As   usual,   $\chi(X)$  and  $\psi(X)$  denote
respectively the character  and  the  pseudocharacter of $X$. The
\emph{closed  pseudocharacter}  $\psi_c(X)$  (defined  only   for
Hausdorff  spaces $X$) is the smallest infinite cardinal $\kappa$
such  that   for   each   $x\in   X$,   there   is  a  collection
$\{V(\eta,x):\eta<\kappa\}$ of open  neighborhoods  of  $x$  such
that   $\bigcap_{\eta<\kappa}\overline{V}(\eta,x)   =   \{x\}$.    A
$\pi$-base  for  $X$ is a collection $\sV$ of non-empty open sets
in $X$ such that if  $U$  is  any  non-empty open set in $X$ then
there   exists   $V\in\sV$   such   that   $V\subset   U$.    The
\emph{$\pi$-weight}  of  $X$  is  $\pi w(X)=\min\{|\sV|:\sV$ is a
$\pi$-base for $X\}+\omega$.  A  family  $\sV$  of non-empty open
sets in $X$ is a local $\pi$-base at a point $x\in X$ if for every open
neighborhood  $U$  of $x$ there is $V\in \sV$ such that $V\subset
U$. The minimal  infinite  cardinal  $\kappa$  such that for each
$x\in X$ there is  a  collection  $\{V(\eta,x):\eta<\kappa\}$  of
non-empty open subsets of $X$ which is a local $\pi$-base for $x$
is  called  the  \emph{$\pi$-character}  of $X$ and is denoted by
$\pi\chi(X)$.

The \emph{Lindel\"of number} of $X$ is $L(X)=\min\{\kappa:$ every
open   cover   of   $X$    has    a   subcover   of   cardinality
$\leq\kappa\}+\omega$. The \emph{weak Lindel\"of number} of  $X$,
denoted  by  $wL(X)$,  is the smallest infinite cardinal $\kappa$
such  that  every  open  cover  of  $X$  has  a  subcollection of
cardinality  $\le\kappa$  whose  union  is  dense  in   $X$.   If
$wL(X)=\omega$  then  $X$ is called \emph{weakly Lindel\"of}. The
\emph{weak Lindel\"of degree of $X$  with respect to closed sets}
is denoted by $wL_c(X)$ and is defined as the  smallest  infinite
cardinal  $\kappa$  such  that for every closed subset $F$ of $X$
and every collection $\sV$ of  open  sets in $X$ that covers $F$,
there  is  a   subcollection   $\sV_0$   of   $\sV$   such   that
$|\sV_0|\le\kappa$   and  $F\subset\overline{\bigcup\sV_0}$.  The
\emph{almost Lindel\"of  number  of  $X$  with  respect to closed
sets} is denoted  by  $aL_c(X)$  and  is  the  smallest  infinite
cardinal  $\kappa$  such  that for every closed subset $F$ of $X$
and every collection $\sV$ of  open  sets in $X$ that covers $F$,
there  is  a   subcollection   $\sV_0$   of   $\sV$   such   that
$|\sV_0|\le\kappa$ and $\{\overline{V}:V\in\sV_0\}$ covers $F$. A
pairwise  disjoint  collection  of  non-empty open sets in $X$ is
called a \emph{cellular family}. The \emph{cellularity} of $X$ is
the  cardinal  $c(X)=\sup\{|\sU|:\sU$  is  a  cellular  family in
$X\}+\omega$.  We say that the \emph{o-tightness} of a space  $X$
does  not  exceed  $\kappa$,  or  $\ot(X)\le\kappa$, if for every
family $\sU$ of open subsets of  $X$ and for every point $x\in X$
with  $x\in\overline{\bigcup\sU}$  there   exists   a   subfamily
$\sV\subset\sU$      such      that      $|\sV|\le\kappa$     and
$x\in\overline{\bigcup\sV}$.  The \emph{tightness} at $x\in X$ is
$t(x,X)=\min\{\kappa:$   for    every    $Y\subseteq    X$   with
$x\in\overline{Y}$, there is $A\subset Y$ with $|A|\le\kappa$ and
$x\in\overline{A}\}$  and  the   \emph{tightness}   of   $X$   is
$t(X)=\sup\{t(x,X):x\in X\}+\omega$.

A space $X$ has a \emph{$G_\kappa$-diagonal} if there is a family
$\{U_\alpha:\alpha<\kappa\}$ of open sets  in  $X\times  X$  such
that     $\Delta_X=\bigcap_{\alpha<\kappa}     U_\alpha$;     if,
additionally,       $\Delta_X      =      \bigcap_{\alpha<\kappa}
\overline{U}_\alpha$    then    $X$     has    a    \emph{regular
$G_\kappa$-diagonal}. Clearly, when $\kappa=\omega$ then $X$  has
a        $G_\delta$-diagonal        (respectively,        regular
$G_\delta$-diagonal).  The \emph{diagonal degree} of $X$, denoted
$\Delta(X)$, is the smallest infinite cardinal $\kappa$ such that
$X$  has  a  $G_\kappa$-diagonal (hence $\Delta(X)=\omega$ if and
only if $X$ has a $G_\delta$-diagonal). It is worth noting that a
space $X$ has  a  regular  $G_\kappa$-diagonal  for some cardinal
$\kappa$ if and only if $X$ is a Urysohn  space.  For  a  Urysohn
space  $X$,  the  minimal  cardinal  $\kappa$ such that $X$ has a
regular $G_\kappa$-diagonal is  denoted by $\overline{\Delta}(X)$
and is called the \emph{regular diagonal degree of $X$}. Given  a
space  $X$  and  $n\in  \mathbb{N}$,  let  $s\Delta_n(X)$  be the
minimal  cardinal  $\kappa$  for  which  there  exists  a  family
$\{\sU_\alpha : \alpha <\kappa\}$ of open covers of $X$ such that
for  any   distinct   points   $x,y\in   X$   we   have  $y\notin
\overline{\st^n(x,\sU_\alpha)}$ for some  $\alpha  <  \kappa$.  A
space  $X$ has \emph{strong rank $n$-diagonal} if $s\Delta_n(X) =
\omega$.

\emph{Condensations} are  one-to-one  and  onto  continuous  mappings. A
space  $X$  is  \emph{submetrizable}  if  it  condenses  onto   a
metrizable     space,     or    equivalently,    $(X,\tau)$    is
\emph{submetrizable} if there  exists  a  topology $\tau'$ on $X$
such that $\tau'\subset \tau$ and $(X,\tau')$ is metrizable.

For definitions not given here and more information we refer  the
reader to \cite{E89}, \cite{Got15}, \cite{Gru84}, \cite{Hodel84},
\cite{Juh80} and \cite{Tka83}.

\section{On a question of Buzyakova}

Here  we  will  establish  that  caliber $\omega_1$ together with
regular $G_\delta$-diagonal is  equivalent  to separability under
the Continuum Hypothesis. We recall that $\omega_1$ is said to be
a \emph{caliber} of a space $X$ if any uncountable  family  $\sU$
of  non-empty  open  subsets  of $X$ has an uncountable subfamily
$\sU'$ such that  $\bigcap\sU'\ne\emptyset$.  It  is  easy to see
that every separable space has caliber $\omega_1$.

Buzyakova proved in \cite[Theorem~2.4]{Buz05}  that  a  separable
space  with a regular $G_\delta$-diagonal condenses onto a second
countable    Hausdorff    space.     She    also    asked    (see
\cite[Question~1.3]{Buz05})  whether  zero-set  diagonal  of  $X$
together with $\omega_1$ caliber imply that $X$ is submetrizable.
We give a positive answer to this question  under  the  Continuum
Hypothesis;  the same method gives a generalization, under CH, of
Theorem~2.4 of the paper \cite{Buz05}.

\begin{theorem}\label{TT}
Under the Continuum Hypothesis, if $X$  is a space with a regular
$G_\delta$-diagonal and caliber $\omega_1$ then $X$ is separable.
\end{theorem}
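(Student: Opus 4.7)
My plan is to combine a cardinality bound with an elementary caliber argument on an enumeration of $X$. First I would observe that caliber $\omega_1$ automatically implies $c(X)\le\omega$: an uncountable cellular family would, by caliber, yield an uncountable subfamily of pairwise disjoint open sets with a common point, which is absurd. With $c(X)\le\omega$ and the regular $G_\delta$-diagonal in hand, Buzyakova's inequality recalled in the introduction gives $|X|\le\cc$, and CH converts this to $|X|\le\omega_1$. The case $|X|\le\omega$ being trivial, I assume $|X|=\omega_1$ and fix an enumeration $X=\{x_\alpha:\alpha<\omega_1\}$.

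Next, supposing toward a contradiction that $X$ is not separable, I would use the enumeration to produce a transfinite family of open sets on which caliber can bite. For every $\alpha<\omega_1$ the countable set $\{x_\beta:\beta<\alpha\}$ fails to be dense, so its closure $F_\alpha$ is a proper subset of $X$ and $V_\alpha:=X\setminus F_\alpha$ is a non-empty open set. Applying caliber $\omega_1$ to the family $\{V_\alpha:\alpha<\omega_1\}$ produces an uncountable $A\subseteq\omega_1$ and a point $p\in X$ belonging to every $V_\alpha$ with $\alpha\in A$. Since any uncountable subset of $\omega_1$ is cofinal, given any $\gamma<\omega_1$ I can pick $\alpha\in A$ with $\alpha>\gamma$ and conclude $p\in X\setminus F_\alpha\subseteq X\setminus\{x_\gamma\}$, whence $p\ne x_\gamma$. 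Thus $p$ lies outside the enumeration of $X$, contradicting $p\in X$.

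The main obstacle, such as it is, is producing the bound $|X|\le\omega_1$: the regular $G_\delta$-diagonal enters only through Buzyakova's cardinality inequality (or equivalently through Gotchev's refinement $|X|\le 2^{c(X)\cdot\overline{\Delta}(X)}$ for Urysohn spaces), and CH is used only to identify $\cc$ with $\omega_1$. Once the bound is secured, the caliber step above is combinatorially routine and uses nothing beyond the cofinality of $\omega_1$; I would therefore expect the final write-up to be essentially as short as this sketch.
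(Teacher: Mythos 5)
Your proposal is correct and follows essentially the same route as the paper: caliber $\omega_1$ gives $c(X)=\omega$, Buzyakova's inequality plus CH gives $|X|\le\omega_1$, and then the enumeration $\{x_\alpha:\alpha<\omega_1\}$ with the open sets $X\setminus\overline{\{x_\beta:\beta<\alpha\}}$ is killed by caliber $\omega_1$ (the paper phrases the last step via point-countability of this decreasing family rather than your cofinality argument, but it is the same contradiction).
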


\begin{proof}
Since $X$ is a space with caliber $\omega_1$, we have $c(X)=\omega$. 
Buzyakova proved  in  \cite{Buz06}  that  if a space $X$ has a countable 
cellularity and a regular $G_\delta$-diagonal then $|X|\le 2^\omega$,   
so   we   have   $|X|\le 2^\omega=\omega_1$. Choose an  enumeration  
$\{x_\alpha :\alpha < \omega_1\}$   of   the   space   $X$   and   let   
$X_\alpha  = \overline{\{x_\beta  :  \beta  <  \alpha\}}$  for  all  
$\alpha < \omega_1$. It is immediate  that 
$U_\alpha = X\setminus X_\alpha$
is an open subset of $X$ for every $\alpha$ and that  the  family
$\sU  =  \{U_\alpha  :  \alpha  <  \omega_1\}$  is decreasing and
point-countable. Since $\omega_1$  is  a  caliber  of  $X$, it is
impossible that all elements of $\sU$ be non-empty and  therefore
$X_\alpha = X$ for some $\alpha<\omega_1$, so $X$ is separable.
\end{proof}

The  following  corollary  answers  Question~3.1  of \cite{Buz05}
under CH.

\begin{corollary}\label{Cor:T}
Under the Continuum Hypothesis,  if  $X$  has a zero-set diagonal
and caliber $\omega_1$ then X is submetrizable. 
\end{corollary}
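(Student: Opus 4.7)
The plan is to reduce the corollary to Theorem~\ref{TT} and then upgrade the separable space to a submetrizable one by exploiting the continuous real-valued function that witnesses the zero-set diagonal.

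First I would observe that having a zero-set diagonal immediately implies having a regular $G_\delta$-diagonal: if $f\colon X\times X\to\RR$ is continuous with $\Delta_X=f^{-1}(0)$, then the open sets $U_n=f^{-1}((-1/n,1/n))$ satisfy $\overline{U_n}\subset f^{-1}([-1/n,1/n])$, so $\bigcap_n\overline{U_n}=\Delta_X$. Hence Theorem~\ref{TT} applies and, under CH, $X$ is separable. Fix a countable dense set $D\subset X$.

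Next I would condense $X$ onto a metrizable space via the evaluation map $e\colon X\to\RR^D$ defined by $e(x)=(f(x,d))_{d\in D}$. Each coordinate map $x\mapsto f(x,d)$ is continuous (being the composition of $x\mapsto(x,d)$ with $f$), so $e$ is continuous, and $\RR^D$ is metrizable because $D$ is countable. To see that $e$ is injective, suppose $x\neq y$ and assume for contradiction that $f(x,d)=f(y,d)$ for every $d\in D$. The function $g\colon X\to\RR$ given by $g(z)=f(x,z)-f(y,z)$ is continuous and vanishes on the dense set $D$, hence $g\equiv 0$; but $g(x)=f(x,x)-f(y,x)=-f(y,x)\neq 0$ since $(y,x)\notin\Delta_X=f^{-1}(0)$, a contradiction. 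Thus $e$ is a condensation of $X$ onto the subspace $e(X)\subset\RR^D$, which is metrizable, so $X$ is submetrizable.

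The only substantive step is the injectivity argument in the second paragraph; everything else is either an appeal to Theorem~\ref{TT} or a standard verification. Since that injectivity argument is a short density-plus-continuity trick, I expect no real obstacle, and the whole proof should fit in a few lines.
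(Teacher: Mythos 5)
Your proposal is correct. The first step coincides exactly with the paper's: zero-set diagonal implies regular $G_\delta$-diagonal (your explicit verification via $U_n=f^{-1}((-1/n,1/n))$ is fine), so Theorem~\ref{TT} yields separability under CH. Where you diverge is the second step: the paper simply invokes Martin's theorem that a separable space with a zero-set diagonal is submetrizable, whereas you prove that implication from scratch with the evaluation map $e(x)=(f(x,d))_{d\in D}$ into $\RR^D$ for a countable dense $D$. Your injectivity argument is sound: $g(z)=f(x,z)-f(y,z)$ is continuous, $g^{-1}(0)$ is closed and contains the dense set $D$, hence $g\equiv 0$, contradicting $g(x)=-f(y,x)\neq 0$ for $x\neq y$; thus $e$ is a condensation onto the metrizable subspace $e(X)\subset\RR^D$, which is exactly the paper's definition of submetrizability. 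The trade-off is only one of economy versus self-containment: the paper's citation is shorter, while your argument makes the corollary independent of the reference and in effect reproves the relevant case of Martin's result.
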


\begin{proof}
Since zero-set diagonal implies regular  $G_\delta$-diagonal,  we
can  apply Theorem~\ref{TT} to see that $X$ must be separable, so
it   is   submetrizable   by    a    theorem   of   Martin   (see
\cite[Theorem~2.1]{Mar75}). 
\end{proof}

The  corollary  that   follows   generalizes   Theorem~2.4   of
\cite{Buz05} under CH.

\begin{corollary}
Under   the   Continuum   Hypothesis,   if   $X$  has  a  regular
$G_\delta$-diagonal and  caliber  $\omega_1$  then  $X$ condenses
onto a second countable Hausdorff space. 
\end{corollary}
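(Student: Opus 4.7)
The plan is almost entirely a two-step reduction to results already available in the paper and in the literature, since the corollary is really a packaging of Theorem~\ref{TT} with Buzyakova's condensation theorem.

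First, I would invoke Theorem~\ref{TT}: under CH, the combined assumption that $X$ has a regular $G_\delta$-diagonal and that $\omega_1$ is a caliber of $X$ forces $X$ to be separable. Nothing new needs to be done here; this is the content of the preceding theorem, and the two hypotheses of the corollary are exactly the hypotheses of Theorem~\ref{TT}.

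Second, I would apply Buzyakova's theorem from \cite[Theorem~2.4]{Buz05}, quoted in the paragraph just before Theorem~\ref{TT}, which says that any separable space with a regular $G_\delta$-diagonal condenses onto a second countable Hausdorff space. Since the regular $G_\delta$-diagonal hypothesis is preserved (it is one of our standing assumptions on $X$), and we have just obtained separability, this theorem applies directly and yields the desired condensation.

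There is no real obstacle here: the whole point of the corollary is that once Theorem~\ref{TT} has reduced the caliber hypothesis to separability under CH, Buzyakova's classical condensation result does the rest. The only thing worth being careful about is citing the correct source and noting explicitly that the regular $G_\delta$-diagonal hypothesis of Theorem~\ref{TT} is the exact hypothesis that \cite[Theorem~2.4]{Buz05} requires, so no additional separation axiom or diagonal strengthening is needed in between. Thus the proof reduces to two sentences: apply Theorem~\ref{TT} to conclude that $X$ is separable, then apply \cite[Theorem~2.4]{Buz05} to conclude that $X$ condenses onto a second countable Hausdorff space.
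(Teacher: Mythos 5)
Your proposal is correct and is exactly the paper's own argument: apply Theorem~\ref{TT} to get separability under CH, then invoke Theorem~2.4 of \cite{Buz05} for the condensation onto a second countable Hausdorff space. Nothing is missing.
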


\begin{proof}
Apply Theorem~\ref{TT} to see that $X$  is  separable  and  hence
Theorem~2.4  of  \cite{Buz05}  is applicable to conclude that $X$
condenses onto a second countable Hausdorff space. 
\end{proof}

\section{Bounds given  by the weak Lindel\"of number}

In this section we will show  that the weak Lindel\"of number and
dense o-tightness give an  upper  bound  on  the  cardinality  of
spaces with strong rank 2-diagonals; observe that such spaces are
automatically  Urysohn.  Basile,  Bella  and  Ridderbos proved in
\cite{BBR14} that if $X$ is a space with a strong rank 2-diagonal
then   $|X|\le   wL(X)^{\pi\chi(X)}.$   Theorem   \ref{GT}  below
generalizes their result. For its statement we need the following
definition.

\begin{definition}
We will say that the \emph{dense o-tightness}  of  $X$  does  not
exceed $\kappa$, or $\dot(X)\le\kappa$, if for every family $\sU$
of  open subsets of $X$ whose union is dense in $X$ and for every
point $x\in X$ there exists a subfamily $\sV\subset\sU$ such that
$|\sV|\le\kappa$         and         $x\in\overline{\bigcup\sV}$.
\end{definition}

The observation below follows immediately from the definitions.

\begin{lemma}
The inequalities  $\dot(X)\le\ot(X)$,  $\dot(X)\le\pi\chi(X)$ and
$\dot(X)\leq c(X)$ are valid for every space $X$. 
\end{lemma}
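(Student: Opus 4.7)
The plan is to handle each of the three inequalities by a short direct argument from the definitions. Throughout, I fix a family $\sU$ of open subsets of $X$ with $\overline{\bigcup\sU}=X$ and a point $x\in X$; the goal in each case is to extract a subfamily $\sV\subset\sU$ of the appropriate cardinality with $x\in\overline{\bigcup\sV}$.

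The first inequality $\dot(X)\le\ot(X)$ is truly immediate: since $x\in X=\overline{\bigcup\sU}$, the defining property of $\ot(X)$ supplies a subfamily $\sV\subset\sU$ with $|\sV|\le\ot(X)$ and $x\in\overline{\bigcup\sV}$. For $\dot(X)\le\pi\chi(X)$, I fix a local $\pi$-base $\{B_\alpha:\alpha<\pi\chi(X)\}$ at $x$; for each $\alpha$, density of $\bigcup\sU$ yields some $U_\alpha\in\sU$ with $B_\alpha\cap U_\alpha\ne\emptyset$. The family $\sV=\{U_\alpha:\alpha<\pi\chi(X)\}$ has cardinality at most $\pi\chi(X)$, and any open neighborhood of $x$ contains some $B_\alpha$ and therefore meets $U_\alpha\in\sV$, giving $x\in\overline{\bigcup\sV}$.

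The inequality $\dot(X)\le c(X)$ is the part that requires a little extra care, since a maximal disjoint subfamily of $\sU$ itself can fail to have union dense in $\bigcup\sU$ and so cannot be used directly. My fix is to enlarge $\sU$ to $\sU^{*}=\{V:V\text{ is non-empty open and }V\subset U\text{ for some }U\in\sU\}$, which by construction is closed under passing to non-empty open subsets. Using Zorn's Lemma I take a maximal disjoint subfamily $\sV^{*}\subset\sU^{*}$; since $\sV^{*}$ is a cellular family, $|\sV^{*}|\le c(X)$. Maximality combined with the closure property forces $\bigcup\sV^{*}$ to be dense in $X$: for any non-empty open $W$, density of $\bigcup\sU$ gives $U\in\sU$ with $W\cap U\ne\emptyset$, and then $W\cap U\in\sU^{*}$ must meet some element of $\sV^{*}$ by maximality, so $W$ meets $\bigcup\sV^{*}$. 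Finally, for each $V^{*}\in\sV^{*}$ I pick $U_{V^{*}}\in\sU$ with $V^{*}\subset U_{V^{*}}$, and set $\sV=\{U_{V^{*}}:V^{*}\in\sV^{*}\}\subset\sU$. Then $|\sV|\le c(X)$ and $\bigcup\sV\supset\bigcup\sV^{*}$ is dense in $X$, whence $x\in\overline{\bigcup\sV}$. The main obstacle is precisely the density step for $\bigcup\sV^{*}$, but passing through $\sU^{*}$ makes it follow cleanly from maximality.
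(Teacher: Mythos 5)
Your proof is correct. The paper itself offers no argument for this lemma (it is introduced with ``The observation below follows immediately from the definitions''), and your three verifications are exactly the standard ones: the first two really are immediate, and for $\dot(X)\le c(X)$ your device of passing to the refinement $\sU^{*}$, taking a maximal cellular subfamily, and pulling back to members of $\sU$ is the right way to make the density step work, since, as you note, a maximal disjoint subfamily of $\sU$ itself need not have dense union.
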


\begin{theorem}\label{GT}
For    every    Urysohn    space    $X$    we    have     $|X|\le
wL(X)^{s\Delta_2(X)\cdot{\dot(X)}}$. 
\end{theorem}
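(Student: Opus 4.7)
Set $\tau = s\Delta_2(X)\cdot\dot(X)$, $\lambda = wL(X)$ and $\kappa = \lambda^\tau$, so that $\kappa^\tau=\kappa$ and $\cf(\tau^+)>\tau$. Fix open covers $\{\sU_\alpha:\alpha<s\Delta_2(X)\}$ witnessing the definition of $s\Delta_2(X)$. The plan is to run a closing-off argument in the spirit of Basile--Bella--Ridderbos, letting dense o-tightness play the role their proof of $|X|\le wL(X)^{\pi\chi(X)}$ assigns to $\pi$-character. The central decomposition I would prove first is this: for each $z\in X$ and each $\alpha$, set $\sG_\alpha(z)=\{U\in\sU_\alpha:U\cap\st(z,\sU_\alpha)=\emptyset\}$ and $G_\alpha(z)=\bigcup\sG_\alpha(z)$; since every $U\in\sU_\alpha\setminus\sG_\alpha(z)$ lies in $\st^2(z,\sU_\alpha)$ we have $X=G_\alpha(z)\cup\st^2(z,\sU_\alpha)$, and the strong rank $2$-diagonal gives $\bigcap_\alpha\st^2(z,\sU_\alpha)=\{z\}$. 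Hence $\bigcup_\alpha\sG_\alpha(z)$ is a family of open sets whose union contains $X\setminus\{z\}$ and is in particular dense in $X$, while each $\st(z,\sU_\alpha)$ is an open neighborhood of $z$ disjoint from $G_\alpha(z)$, so $z\notin\overline{G_\alpha(z)}$ for every individual $\alpha$.

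I would then construct by transfinite recursion an increasing chain $\{H_\xi:\xi<\tau^+\}$ of closed subsets of $X$ with $|H_\xi|\le\kappa$. At each successor $\xi+1$, for every $\alpha$ apply weak Lindel\"of to $\sU_\alpha$ to pick $\sW_\alpha^\xi\subseteq\sU_\alpha$ of cardinality $\le\lambda$ with $\bigcup\sW_\alpha^\xi$ dense in $X$. The closing-off at $\xi+1$ enumerates every subfamily $\sV\subseteq\bigcup_{\eta\le\xi,\,\alpha}\sW_\alpha^\eta$ of cardinality $\le\tau$ (there are at most $\kappa^\tau=\kappa$ of them), and for each such $\sV$ whose union is dense in $X$ it adds to $H_{\xi+1}$ the dense o-tightness witnesses---subfamilies of size $\le\dot(X)$---required by every previously chosen point, together with a representative point from each member of $\sV$. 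This keeps $|H_{\xi+1}|\le\kappa$, and limit stages are handled by taking closures and using the built-in witnesses to control cardinality. Setting $H=\bigcup_{\xi<\tau^+}H_\xi$, one has $|H|\le\kappa$ and $H$ closed. To prove $H=X$, suppose $z\in X\setminus H$. A weak-Lindel\"of refinement of the key decomposition (apply $X=G_\alpha(z)\cup\st^2(z,\sU_\alpha)$ inside $\sW_\alpha^{\xi_0}$) shows that at some stage $\xi_0$ the family $\bigcup_\alpha(\sG_\alpha(z)\cap\sW_\alpha^{\xi_0})$ has dense union in $X$; then $\dot(X)\le\tau$ yields $\sG''\subseteq\bigcup_\alpha(\sG_\alpha(z)\cap\sW_\alpha^{\xi_0})$ of size $\le\tau$ with $z\in\overline{\bigcup\sG''}$. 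Since $\sG''$ is among the subfamilies treated by the closing-off at stage $\xi_0+1$, the witnesses it has deposited in $H$ should force $z\in H$, contradicting $z\notin H$.

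The main obstacle is establishing this last contradiction. The subfamily $\sG''$ may draw from $\sG_\alpha(z)$ for up to $\tau$ distinct $\alpha$, and the separating open neighborhoods $\st(z,\sU_\alpha)$ of $z$ do not combine to a single open neighborhood disjoint from $\bigcup\sG''$---their intersection is not open when $|S|=|\{\alpha:\sG''\cap\sG_\alpha(z)\neq\emptyset\}|$ is infinite. Consequently the contradiction cannot be read off directly from $z\notin\overline{G_\alpha(z)}$ for individual $\alpha$; rather, the closing-off must be arranged so that the dense o-tightness witnesses it accumulates for every potential $\sG''$ already place points of $H$ arbitrarily close to each candidate exterior $z$. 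Verifying that this bookkeeping indeed compels $z\in H$---i.e.\ orchestrating the interplay between weak Lindel\"of (to bound the workspace $\bigcup_\alpha\sW_\alpha^{\xi_0}$), dense o-tightness (to shrink $\sG(z)$ to size $\le\tau$), and strong rank $2$-diagonal (to separate $z$ from each $G_\alpha(z)$)---is the technical heart of the proof.
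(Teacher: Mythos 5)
Your proposal is not a proof: it is a plan for a closing-off argument whose decisive step you yourself leave unresolved, and that step does not in fact go through as described. Concretely, after shrinking $\bigcup_\alpha(\sG_\alpha(z)\cap\sW_\alpha^{\xi_0})$ to a family $\sG''$ of size $\le\tau$ with $z\in\overline{\bigcup\sG''}$, the information your recursion has deposited in $H$ --- one representative point of $H$ in each member of $\sG''$, plus dense o-tightness witnesses for \emph{previously chosen} points --- does not force $z\in\overline{H}=H$: a point can lie in the closure of a union of open sets without lying in the closure of any selection of one point per set, and $z$ is not one of the points for which witnesses were stored. The other half of the intended contradiction fails for the reason you already noticed: each $U\in\sG''\cap\sG_\alpha(z)$ misses the neighborhood $\st(z,\sU_\alpha)$ of $z$, but $\sG''$ may meet $\sG_\alpha(z)$ for infinitely many $\alpha$ and $\bigcap_\alpha\st(z,\sU_\alpha)$ need not be a neighborhood, so one cannot conclude $z\notin\overline{\bigcup\sG''}$. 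So the ``technical heart'' you defer is precisely the missing content, and on the evidence given it cannot be supplied within this scheme.

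The paper's proof avoids all of this bookkeeping with a direct injection, and you should note how the quantifiers are arranged differently there. Weak Lindel\"ofness is applied \emph{once} to each cover $\sU_\eta$ to get $\sD_\eta\subset\sU_\eta$ with $|\sD_\eta|\le wL(X)$ and $\overline{\bigcup\sD_\eta}=X$; dense o-tightness then gives, for each $x$ and $\eta$, a subfamily $\sV_\eta(x)\subset\sD_\eta$ of size $\le\dot(X)$ with $x\in\overline{\bigcup\sV_\eta(x)}$, pruned so that every member meets $\st(x,\sU_\eta)$. The key point is that then $\bigcup\sV_\eta(x)\subset\st^2(x,\sU_\eta)$, so if $y\notin\overline{\st^2(x,\sU_\eta)}$ the single index $\eta$ already separates: $\sV_\eta(y)\ne\sV_\eta(x)$. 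Hence $x\mapsto(\sV_\eta(x))_{\eta<\kappa}$ is injective into $([\sD]^{\le\mu})^{\kappa}$, which gives the bound at once. No transfinite recursion, no closed set $H$, and no need to combine separating neighborhoods across different indices $\alpha$ --- which is exactly the combination your argument would require and cannot perform.
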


\begin{proof}
Assume  that  $wL(X)\le\lambda$,  $\dot(X)\le\mu$  and, for some
infinite cardinal $\kappa$, let  $\{\sU_\eta: \eta<\kappa\}$ be a
family witnessing the  inequality  $s\Delta_2(X)\le\kappa$.   For
each    $\eta    <    \kappa$,    we    can    fix    a    family
$\mathcal{D}_\eta\subset\mathcal{U}_\eta$        such        that
$|\mathcal{D}_\eta|\le\lambda$       and       $\overline{\bigcup
\mathcal{D}_\eta}=X$.                  The                 family
$\sD=\bigcup\{\sD_\eta:\eta<\kappa\}$    has    cardinality   not
exceeding $\lambda\cdot\kappa$.

It  follows  from  $\dot(X)\le\mu$  that  for  every $x\in X$ and
$\eta<\kappa$ we can find a family $\sV'_\eta(x) \subset \mathcal
D_\eta$  such  that   $x\in  \overline{\bigcup\sV'_\eta(x)}$  and
$|\sV'_\eta(x)|\le\mu$.    The   cardinality   of   the    family
$\sV_\eta(x)=\{V\in                            \sV'_\eta(x): V\cap
\st(x,\sU_\eta)\ne\emptyset\}$  does  not exceed $\mu$ either and
$x\in  \overline{\bigcup\sV_\eta(x)}$  for   any   $x\in  X$  and
$\eta<\kappa$.

Letting $F(x)(\eta) = \sV_\eta(x)$ for any point $x\in X$ and any
ordinal      $\eta<\kappa$       we       define       a      map
$F\colon X\rightarrow[\sD]^{\le\mu}$. To see that $F$ is injective 
take any  pair  $x,y$  of  distinct  points  of  $X$.   There   exists
$\eta<\kappa$  such  that  $y\notin\overline{\st^2(x,\sU_\eta)}$.
Then    $F(x)(\eta)    =    \sV_\eta(x)$    which    shows   that
$x\in\overline{\bigcup\sV_\eta(x)}$                           and
$\bigcup\sV_\eta(x)\subset\st^2(x,\sU_\eta)$                   so
$y\notin\overline{\bigcup\sV_\eta(x)}$. Therefore $F(y)(\eta) \ne
\sV_\eta(x)=F(x)(\eta)$ and hence $F(x)\ne F(y)$. This shows that
$F$     is     an     injective     map     and     consequently,
$|X|\le|[\sD]^{\le\mu}|  \le(\lambda\cdot\kappa)^\mu  \le(\lambda
\cdot \lambda^\kappa)^\mu\le\lambda^{\kappa\cdot\mu}$    as
promised. 
\end{proof}

\begin{corollary}
If $X$ is a  space  with  a  strong rank 2-diagonal, then $|X|\le
wL(X)^{\dot(X)}$. 
\end{corollary}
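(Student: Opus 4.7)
The plan is to obtain this corollary as a direct specialization of Theorem~\ref{GT}. First I would note that the hypothesis ``$X$ has a strong rank 2-diagonal'' is, by the definition given in Section~2, exactly the statement $s\Delta_2(X)=\omega$; moreover, as already remarked at the opening of this section, any such space is automatically Urysohn, so the hypothesis of Theorem~\ref{GT} is satisfied by $X$.

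Then I would simply substitute $s\Delta_2(X)=\omega$ into the bound of Theorem~\ref{GT}, obtaining
\[
|X|\le wL(X)^{s\Delta_2(X)\cdot\dot(X)}=wL(X)^{\omega\cdot\dot(X)}.
\]
The exponent $\omega\cdot\dot(X)$ collapses to $\dot(X)$, since by the conventions declared in Section~2 the cardinal function $\dot(X)$ is always an infinite cardinal. This yields $|X|\le wL(X)^{\dot(X)}$, which is the desired conclusion.

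I do not anticipate any obstacle: the corollary is in essence a bookkeeping observation on Theorem~\ref{GT}, the only content being the absorption of the trivial factor $s\Delta_2(X)=\omega$ into the already infinite exponent $\dot(X)$. The only point worth double-checking while writing this up is that the automatic Urysohn property follows from $s\Delta_2(X)=\omega$ in the $T_{2\frac12}$ sense used in the paper, which is straightforward: given distinct $x,y\in X$ and $\alpha$ with $y\notin\overline{\st^2(x,\sU_\alpha)}$, any $U_1\in\sU_\alpha$ containing $x$ satisfies $\overline{U_1}\cap\overline{W}=\emptyset$ for $W=X\setminus\overline{\st^2(x,\sU_\alpha)}$, because any $V\in\sU_\alpha$ meeting both $U_1$ and $W$ would lie in $\st^2(x,\sU_\alpha)$ yet contain a point outside $\overline{\st^2(x,\sU_\alpha)}$.
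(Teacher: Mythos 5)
Your proposal is correct and is exactly the intended argument: the corollary is the immediate specialization of Theorem~\ref{GT} to the case $s\Delta_2(X)=\omega$, with the factor $\omega$ absorbed into the infinite exponent $\dot(X)$, and the Urysohn hypothesis supplied by the paper's own remark (which you verify correctly) that spaces with a strong rank 2-diagonal are automatically Urysohn.
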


\begin{corollary}
For  every  space  $X$  of countable dense o-tightness and strong
rank 2-diagonal we have $|X|\le wL(X)^{\omega}$. 
\end{corollary}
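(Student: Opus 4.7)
The plan is to read this off as an immediate specialization of Theorem~\ref{GT} (or of the preceding corollary). The hypothesis ``strong rank 2-diagonal'' translates exactly to $s\Delta_2(X) = \omega$, and ``countable dense o-tightness'' translates to $\dot(X) \le \omega$. Moreover, any space admitting a family of open covers witnessing $s\Delta_2(X) < \infty$ is automatically Urysohn (any two distinct points $x,y$ can be separated by $\st(x,\sU_\alpha)$ and $X\setminus\overline{\st^2(x,\sU_\alpha)}\supset\{y\}$, giving disjoint open sets with disjoint closures), so the Urysohn hypothesis of Theorem~\ref{GT} is met.

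Substituting these equalities into the conclusion of Theorem~\ref{GT} gives
\[
|X| \le wL(X)^{s\Delta_2(X)\cdot\dot(X)} = wL(X)^{\omega\cdot\omega} = wL(X)^{\omega},
\]
which is the desired bound. Equivalently, one can simply invoke the previous corollary ($|X|\le wL(X)^{\dot(X)}$ for spaces with strong rank 2-diagonal) and replace $\dot(X)$ by $\omega$. There is no genuine obstacle here; the statement is a clean specialization, included presumably to highlight the natural case in which the bound reduces to $wL(X)^{\omega}$ and thus directly generalizes the Basile--Bella--Ridderbos inequality $|X|\le wL(X)^{\pi\chi(X)}$ in the countable dense o-tightness regime.
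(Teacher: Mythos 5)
Your proof is correct and matches the paper's (implicit) argument exactly: the corollary is the direct specialization of Theorem~\ref{GT} obtained by setting $s\Delta_2(X)=\dot(X)=\omega$, with the Urysohn hypothesis supplied automatically by the strong rank $2$-diagonal, just as the paper notes at the start of the section. Nothing further is needed.
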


\begin{corollary}
If  $X$  is  a  weakly   Lindel\"of  space  with  a  strong  rank
2-diagonal, then $|X|\le 2^{\dot(X)}$. 
\end{corollary}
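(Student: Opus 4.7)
The plan is simply to derive this corollary as an immediate specialization of Theorem~\ref{GT}. The hypotheses are that $X$ is weakly Lindel\"of, so $wL(X)=\omega$, and that $X$ has a strong rank $2$-diagonal, so $s\Delta_2(X)=\omega$. Moreover, since every space with a strong rank $2$-diagonal is Urysohn (as noted in the section introduction), Theorem~\ref{GT} is applicable to $X$.

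First I would invoke Theorem~\ref{GT} to obtain the bound $|X|\le wL(X)^{s\Delta_2(X)\cdot\dot(X)}$. Then I would substitute the two equalities above to get $|X|\le\omega^{\omega\cdot\dot(X)}$. Since the cardinal function $\dot(X)$ is infinite by definition, $\omega\cdot\dot(X)=\dot(X)$, and $\omega^{\dot(X)}=2^{\dot(X)}$. Chaining these identities yields $|X|\le 2^{\dot(X)}$.

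There is essentially no obstacle here: the entire argument is bookkeeping on cardinal arithmetic once Theorem~\ref{GT} is in hand. The only minor point worth flagging is the use of the standing assumption that all cardinal invariants under discussion take values at least $\omega$, which is what allows the collapse $\omega\cdot\dot(X)=\dot(X)$ and the identification $\omega^{\dot(X)}=2^{\dot(X)}$.
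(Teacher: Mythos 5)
Your proposal is correct and matches the paper's intent exactly: the corollary is stated without proof precisely because it is the immediate specialization of Theorem~\ref{GT} you describe, using $wL(X)=s\Delta_2(X)=\omega$, the observation that strong rank $2$-diagonal implies Urysohn, and the standard identities $\omega\cdot\dot(X)=\dot(X)$ and $\omega^{\dot(X)}=2^{\dot(X)}$ for infinite cardinals.
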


\begin{corollary}
Assume that $X$ is a weakly Lindel\"of space of countable  dense
o-tightness and strong rank 2-diagonal. Then $|X|\le 2^{\omega}$.
\end{corollary}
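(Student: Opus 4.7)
The plan is to observe that this corollary is an immediate specialization of Theorem~\ref{GT} once one checks that the hypotheses feed the required cardinal inequalities. First I would note that a strong rank $2$-diagonal is the assertion $s\Delta_2(X)=\omega$, and, as remarked in the paragraph preceding Theorem~\ref{GT}, any space with a strong rank $2$-diagonal is automatically Urysohn; hence Theorem~\ref{GT} is applicable to $X$.

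Next I would translate the three numerical hypotheses into the exponents appearing on the right-hand side of Theorem~\ref{GT}. Being weakly Lindel\"of gives $wL(X)=\omega$, having countable dense o-tightness gives $\dot(X)=\omega$, and the strong rank $2$-diagonal gives $s\Delta_2(X)=\omega$. Plugging these values into the inequality $|X|\le wL(X)^{s\Delta_2(X)\cdot\dot(X)}$ supplied by Theorem~\ref{GT}, one obtains $|X|\le\omega^{\omega\cdot\omega}=\omega^{\omega}=2^{\omega}$, which is the desired conclusion.

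Since the entire argument reduces to invoking Theorem~\ref{GT} and performing elementary cardinal arithmetic, there is no genuine obstacle to overcome; the only point that warrants explicit mention is the verification that $X$ is Urysohn, which is a standing observation in the section rather than a fresh computation. Thus the proof will be one or two sentences long.
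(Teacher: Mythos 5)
Your proposal is correct and is exactly the argument the paper intends: the corollary is an immediate specialization of Theorem~\ref{GT} with $wL(X)=\dot(X)=s\Delta_2(X)=\omega$, using the standing observation that a strong rank $2$-diagonal forces $X$ to be Urysohn, together with the cardinal identity $\omega^{\omega}=2^{\omega}$.
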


\begin{corollary}\label{CTk}
If  $X$  is a Urysohn space, then $$|X|\le wL(X)^{\pi\chi(X)\cdot
s\Delta_2(X)}.$$ 
\end{corollary}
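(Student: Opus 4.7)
The plan is to derive Corollary \ref{CTk} as an essentially immediate consequence of Theorem \ref{GT} combined with the lemma preceding it. The corollary differs from Theorem \ref{GT} only in that the exponent $\dot(X)$ has been replaced by the potentially larger $\pi\chi(X)$, so the work reduces to a single substitution.

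First I would invoke Theorem \ref{GT} to obtain the bound $|X|\le wL(X)^{s\Delta_2(X)\cdot\dot(X)}$, which is applicable since $X$ is assumed to be Urysohn. Next I would apply the lemma (stated just before Theorem \ref{GT}) which asserts $\dot(X)\le\pi\chi(X)$ for every space $X$; the proof of that lemma is immediate from the definitions, since any local $\pi$-base at $x$ of cardinality at most $\pi\chi(X)$ supplies, for any family $\sU$ with $x\in\overline{\bigcup\sU}$, a suitably small witnessing subfamily by picking basic sets contained in members of $\sU$.

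From $\dot(X)\le\pi\chi(X)$ we get $s\Delta_2(X)\cdot\dot(X)\le s\Delta_2(X)\cdot\pi\chi(X)$, and since the function $\kappa\mapsto wL(X)^\kappa$ is nondecreasing, monotonicity of cardinal exponentiation yields
\[
|X|\le wL(X)^{s\Delta_2(X)\cdot\dot(X)}\le wL(X)^{s\Delta_2(X)\cdot\pi\chi(X)},
\]
which is the desired inequality.

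There is no substantive obstacle in this step: the content lies entirely in Theorem \ref{GT}, and the corollary is a weakening obtained by trading the sharper cardinal invariant $\dot(X)$ for the more classical $\pi\chi(X)$. The only thing to verify is that the lemma genuinely gives $\dot(X)\le\pi\chi(X)$ without any separation hypothesis, which it does.
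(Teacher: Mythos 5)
Your proposal is correct and follows exactly the route the paper intends: the corollary is stated without proof precisely because it is the immediate combination of Theorem~\ref{GT} with the lemma's inequality $\dot(X)\le\pi\chi(X)$ and monotonicity of cardinal exponentiation. Nothing further is needed.
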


\begin{corollary}
For  every  Urysohn  space  $X$  we  have  $$|X|\le  2^{c(X)\cdot
s\Delta_2(X)}.$$ 
\end{corollary}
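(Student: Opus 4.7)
The plan is to derive this as a direct consequence of Theorem~\ref{GT}, combined with the two standard bounds on $wL$ and $\dot$ in terms of $c$. Explicitly, Theorem~\ref{GT} gives
\[
|X|\le wL(X)^{s\Delta_2(X)\cdot \dot(X)}
\]
for any Urysohn space $X$, so it suffices to bound both $wL(X)$ and $\dot(X)$ by $c(X)$ and then absorb the exponentiation.

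First I would invoke the well-known inequality $wL(X)\le c(X)$, which holds in every space (any cellular refinement of an open cover witnesses weak Lindelöfness). Next I would use the lemma stated just before Theorem~\ref{GT}, which gives $\dot(X)\le c(X)$. Substituting both into the conclusion of Theorem~\ref{GT} yields
\[
|X|\le c(X)^{s\Delta_2(X)\cdot c(X)}.
\]

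The final step is the routine cardinal-arithmetic identity $c(X)^{c(X)}=2^{c(X)}$, valid since $c(X)$ is infinite. Rearranging the exponent as $c(X)\cdot s\Delta_2(X)$ and pulling $c(X)$ through gives
\[
c(X)^{c(X)\cdot s\Delta_2(X)}=\bigl(c(X)^{c(X)}\bigr)^{s\Delta_2(X)}=\bigl(2^{c(X)}\bigr)^{s\Delta_2(X)}=2^{c(X)\cdot s\Delta_2(X)},
\]
which is the desired bound. There is no real obstacle here: the whole argument is bookkeeping, and the only thing to verify is that the two inequalities $wL(X)\le c(X)$ and $\dot(X)\le c(X)$ are available and that the final exponent computation is legitimate, both of which are immediate.
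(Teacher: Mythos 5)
Your proof is correct and follows the route the paper intends: the corollary is stated without proof as an immediate consequence of Theorem~\ref{GT} together with the lemma's bound $\dot(X)\le c(X)$, the standard inequality $wL(X)\le c(X)$, and the identity $c(X)^{c(X)}=2^{c(X)}$. Your cardinal arithmetic at the end is also accurate.
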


Gotchev    established     in     \cite{Got15}    that    $|X|\le
wL(X)^{\chi(X)\cdot  \overline{\Delta}(X)}$  for  any  Urysohn
space $X$ so it  would  be  interesting  to  see  whether  it  is
possible  to  prove  the following simultaneous generalization of
his result and Corollary~\ref{CTk}.

\begin{question}
Is it  true  that  the  inequality $|X|\le wL(X)^{\pi\chi(X)\cdot
\overline{\Delta}(X)}$   holds   for    all    Urysohn    spaces?
\end{question}

We will  show next that Theorem \ref{GT} indeed improves the 
result of Basile, Bella and Ridderbos mentioned in the beginning of this Section.

\begin{example} \rm 
It  is  well  known  that  the  Cantor  cube  $K=\{0,1\}^\cc$  is
separable; let $X$  be  a  countable  dense  subspace of $K$. The
space  $X$   being   submetrizable   and   countable,   we   have
$s\Delta_2(X)=      L(X)     =wL(X)=     \dot(X)=\omega$     while
$\pi\chi(X)=\pi\chi(K)=\cc$. Therefore the formula

\medskip
$wL(X)^{s\Delta_2(X)\cdot \dot(X)} =\omega^\omega=\cc <2^\cc =
wL(X)^{s\Delta_2(X)\cdot \pi\chi(X)}$

\medskip\noindent
witnesses that Theorem~\ref{GT}  is  strictly  stronger than the
result of Basile, Bella and Ridderbos.
\end{example}

The following example shows  that  there  is even a compact space
$X$ such that $\dot(X)<\min\{\ot(X),\pi\chi(X)\}$.

\begin{example}\label{Ex:AB} \rm
For every  infinite  cardinal  $\tau$,  there  exists  a  compact
Hausdorff          space          $X$          such          that
$\dot(X)=\tau<\min{\{\ot(X),\pi\chi(X)\}}$. 
\end{example}

\begin{proof}
Let  $Y=D^\kappa$  be  the  Cantor  cube,  where $\kappa=\tau^+$.
Denote by $Z$ the space  $\kappa+1$  of ordinal numbers less than
or equal to $\kappa$ endowed with the order  topology.  

Fix a point $y_0\in Y$ and let
$X$  be  the  quotient  space  of the topological sum $Y\oplus Z$
obtained by identifying $y_0$ with the
point $\kappa\in Z$. It is clear that $Y$, $Z$, and $X$ are
compact Hausdorff spaces. The space $X$ contains closed copies of
$Y$ and $Z$, where $Z'=Z\setminus\{\kappa\}$ accumulates  at  the
point  $y_0$  \lq\lq{outside\rq\rq}  of  $Y$. We denote the point
$\{y_0,\{\kappa\}\}$   of    $X$    by    $x_0$.    Notice   that
$Y\setminus\{y_0\}$ and $Z'$ are disjoint  open  subsets  of  $X$
once  $Y$ and $Z$ are identified with the corresponding subspaces
of $X$. 

Let  $\mathcal{U}$  be  a family of open subsets of $X$ such that
$\bigcup\mathcal{U}$ is dense  in $X$.  Then $\mathcal{V}=\{U\cap
Y: U\in\mathcal{U}\}$ is a family of open sets in $Y$ whose union
is dense in $Y$.  Since  $Y$  has  countable  cellularity,  there
exists a countable subfamily $\mathcal{V}'$ of $\mathcal{V}$ such
that  $\bigcup\mathcal{V}'$  is  dense  in  $Y$. Take a countable
subfamily    $\mathcal{U}'$    of    $\mathcal{U}$    such   that
$\mathcal{V}'=\{U\cap      Y:      U\in\mathcal{U}'\}$.      Then
$Y\subset\overline{\bigcup\mathcal{U}'}$  and,   in   particular,
$x_0\in\overline{\bigcup\mathcal{U}'}$.  If  $x\in X\setminus Y$,
that is $x\in Z'$, then we  use the inequality $\chi(Z')\leq\tau$ to
choose a subset  $A\subset  Z'\cap\bigcup\mathcal{U}$  such  that
$x\in\overline{A}$  and  $|A|\leq\tau$.  Then we take a subfamily
$\mathcal{W}$       of        $\mathcal{U}$       such       that
$A\subset\bigcup\mathcal{W}$ and $|\mathcal{W}|\leq\tau$.  It  is
clear       that        $x\in\overline{A}\subset\overline{\bigcup
\mathcal{W}}$. This implies that $\dot(X)\leq\tau$.

Since $Z'$ is open  in  $X$  and  $x_0$ compactifies $Z'$, we see
that $\ot(X)=\ot(Z)=\kappa$. The same  argument  along  with  the
equality         $\pi\chi(y_0,Y)=\kappa$        imply        that
$\pi\chi(X)=\pi\chi(Z)=\kappa$.       It       follows       that
$\dot(X)=\tau<\kappa=\min\{\ot(X),\pi\chi(X)\}$. 
\end{proof}

\begin{remark}
{\rm We do  not  know  whether  the  difference  between $\dot(X)$ 
and $\min\{\ot(X),\pi\chi(X)\}$  can  be  arbitrarily  large  for   a
compact  space  $X$. However, the  difference   in  question for 
non-compact spaces  can  be arbitrary large. 
To see this one can strengthen the  topology  of
the   ordinal   space   $Z=\kappa+1$  in  Example~\ref{Ex:AB}  by
declaring the points of $Z'$ isolated  and taking the sets of the
form $Z\setminus A$, with $A\subset  Z'$  and  $|A|\leq\tau$,  as
basic  open neighborhoods of the point $\{\kappa\}$. Let $Z^*$ be
the   resulting    space.    Repeating    the   construction   in
Example~\ref{Ex:AB} applied to the topological  sum  of  $Y$  and
$Z^*$,    we   obtain   a   quotient   space   $X^*$   satisfying
$\dot(X^*)=\omega$ and $\min{\{\ot(X^*),\pi\chi(X^*)\}}=\kappa$.}
\end{remark}

\section{Bounds on cardinality involving o-tightness}

In 1969 Arhangel{\cprime}skii proved  that the inequality $|X|\le
2^{\chi(X)\cdot L(X)}$ is valid for  every  Hausdorff  space  $X$
(\cite{Arh69}). In 1972 \v{S}apirovski{\u\i} improved this result
by   showing   that   $|X|\le   2^{t(X)\cdot\psi(X)\cdot   L(X)}$
(\cite{Sap72}).    Later   Willard   and   Dissanayake   improved
Arhangel{\cprime}skii's  inequality   by   showing  that  $|X|\le
2^{t(X)\cdot\psi_c(X)\cdot\pi\chi(X)\cdot               aL_c(X)}$
(\cite{WilDis84}). Then Bella  and  Cammaroto  noticed  that  the
cardinal  function  $\pi\chi(X)$ could be omitted and showed that
$|X|\le  2^{t(X)\cdot\psi_c(X)\cdot  aL_c(X)}$ (\cite{BelCam88}).
Their result is also a generalization  of  \v{S}apirovski{\u\i}'s
inequality.  The following theorem gives another strengthening of
the  theorem  of  Willard  and  Dissanayake;  therefore  it  also
generalizes  Arhangel{\cprime}skii's  inequality.   It  is  worth
noting that $\ot(X)\le t(X)$ and $\ot(X)\le c(X)$ for  any  space
$X$.

\begin{theorem}\label{TGT2}
If  $X$  is  a  Hausdorff space, then \begin{equation}\label{Eq4}
|X|\le       \pi\chi(X)^{\ot(X)\cdot\psi_c(X)\cdot      aL_c(X)}.
\end{equation} 
\end{theorem}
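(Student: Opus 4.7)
The plan is to bound $|X|$ by $\mu^\kappa$, where $\mu=\pi\chi(X)$ and $\kappa=\ot(X)\cdot\psi_c(X)\cdot aL_c(X)$, via a transfinite closing-off construction in the spirit of Arhangel$^\prime$skii. First, for every $x\in X$, fix a local $\pi$-base $\mathcal{B}(x)$ at $x$ with $|\mathcal{B}(x)|\le\mu$, and a family $\{V(\eta,x):\eta<\kappa\}$ of open neighbourhoods of $x$ with $\bigcap_{\eta<\kappa}\overline{V(\eta,x)}=\{x\}$; one may assume this family is closed under finite intersections.

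Next I build by transfinite recursion an increasing chain $\{H_\alpha:\alpha<\kappa^+\}$ of subsets of $X$ with $|H_\alpha|\le\mu^\kappa$ satisfying the following closing-off property: for every $\beta<\alpha$, every $A\subseteq H_\beta$ with $|A|\le\kappa$, and every function $f\colon A\to\kappa$, whenever $X\setminus\bigcup_{x\in A}\overline{V(f(x),x)}$ is non-empty, it meets $H_\alpha$. Since at each stage there are at most $|[H_\beta]^{\le\kappa}|\cdot\kappa^\kappa\le\mu^\kappa$ pairs $(A,f)$ to consider, at most $\mu^\kappa$ new points need to be added. Setting $H=\bigcup_{\alpha<\kappa^+}H_\alpha$ gives $|H|\le\mu^\kappa$.

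It then suffices to show $H=X$. Assuming $y\in X\setminus H$, use $\psi_c(X)\le\kappa$ to select, for each $x\in H$, an index $\eta^*(x)<\kappa$ with $y\notin\overline{V(\eta^*(x),x)}$. The aim is to exhibit a set $A\subseteq H$ with $|A|\le\kappa$ such that $H\subseteq\bigcup_{x\in A}\overline{V(\eta^*(x),x)}$; this would contradict the closing-off property, because the non-empty set $X\setminus\bigcup_{x\in A}\overline{V(\eta^*(x),x)}$ contains $y$ yet is disjoint from $H$. To construct $A$ when $y\notin\overline{H}$, I would apply $aL_c(X)\le\kappa$ to the closed set $\overline{H}$, covered by $\{X\setminus\overline{V(\eta,y)}:\eta<\kappa\}$, extract a $\kappa$-sized subcollection whose closures still cover $\overline{H}$, and then use the fixed local $\pi$-bases together with $\ot(X)\le\kappa$ to translate this data into the required $A$ of $V(\eta^*(x),x)$-type sets.

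The main obstacle is the case $y\in\overline{H}$, where the $aL_c$-argument on $\overline{H}$ does not apply directly. Here one applies $\ot$ at $y$ to the open family $\{V(\eta^*(x),x):x\in H\}$, whose union already has $y$ in its closure, producing a $\kappa$-sized sub-family $\{V(\eta^*(x),x):x\in A\}$ with the same property; then $\mathcal{B}(y)$ and the $\psi_c$-family at $y$ have to be combined to carve out a neighbourhood of $y$ disjoint from each $\overline{V(\eta^*(x),x)}$, $x\in A$, yielding the contradiction. This step is delicate precisely because local $\pi$-base elements of $x$ need not satisfy $x\in\overline{B}$ and the $\psi_c$-family $\{V(\eta,y)\}$ is not a local base at $y$, so the passage from ``$y$ lies in the closure of a $\kappa$-sized open family'' to the required finite-closure-type statement about $\overline{V(\eta^*(x),x)}$'s is the combinatorial heart of the proof and where $\pi\chi$ really earns its keep.
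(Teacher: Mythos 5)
Your skeleton (a $\kappa^+$-length closing-off chain, witness points for non-covering families of closures, and a final $aL_c$ contradiction) matches the paper's, but your construction is missing one of the two closing-off mechanisms, and that omission is exactly what produces the obstacle you flag at the end. Your chain $\{H_\alpha\}$ only absorbs witness points for families $\{\overline{V(f(x),x)}:x\in A\}$ that fail to cover $X$; nothing in the recursion forces $H=\bigcup_\alpha H_\alpha$ to be \emph{closed}. Consequently, when you take $y\in X\setminus H$ you cannot apply $aL_c(X)\le\kappa$ to $H$ with the cover $\{V(\eta^*(x),x):x\in H\}$ (the $aL_c$ hypothesis requires a closed set), and both of your workarounds fail: in the case $y\notin\overline{H}$, the cover $\{X\setminus\overline{V(\eta,y)}:\eta<\kappa\}$ consists of sets of the wrong form and the promised ``translation'' into sets $V(\eta^*(x),x)$ is not supplied; in the case $y\in\overline{H}$, o-tightness gives you a $\kappa$-sized subfamily with $y$ in the closure of its union, but knowing $y\notin\overline{V(\eta^*(x),x)}$ for each $x$ separately does not yield a single neighbourhood of $y$ missing the whole union, and no amount of juggling the $\psi_c$-family at $y$ (which is not a local base) fixes this. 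You correctly sense that this is the combinatorial heart of the matter, but you do not resolve it, so the proof is incomplete.

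The missing idea in the paper's argument is a \emph{second} family of witness points that makes the set $F$ closed before $aL_c$ is ever invoked. For each small set $A$ one also closes off under points $q_A(\sP)$ lying in $\bigl(\bigcap\{\overline{\bigcup\sO}:\sO\in\sP\}\bigr)\setminus A$, where each $\sO$ is a $\kappa$-sized family of local $\pi$-base elements of points of $A$. Then, for $x\in\overline{F}\setminus F$ and each $V\in\sV_x$, the $\pi$-base elements of points of $F$ contained in $V$ have union dense in $V$ near $x$ (this is where $\pi\chi$ actually earns its keep); o-tightness extracts $\sO_V$ with $x\in\overline{\bigcup\sO_V}\subseteq\overline{V}$, and intersecting over $V\in\sV_x$ pins down $x$ exactly, so $x$ would have been added as some $q_{F_\xi}(\sP)$ --- contradiction. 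Once $F$ is closed, your troublesome case $y\in\overline{H}\setminus H$ simply cannot arise, and the $aL_c$ step goes through verbatim as you describe it. Without an analogue of these $q$-points (or some other device guaranteeing closedness of $H$), your argument has a genuine gap.
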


\begin{proof}
Let     $\pi\chi(X)=\tau$     and      $\ot(X)\cdot\psi_c(X)\cdot
aL_c(X)=\kappa$.   For  each $x\in X$ choose a $\pi$-base $\sU_x$
at the point $x$ and  a  family  $\sV_x$ of open neighborhoods of
$x$   such   that   $|\sU_x|\le\tau$,   $|\sV_x|\le\kappa$    and
$\bigcap\{\overline{V}:V\in\sV_x\}=\{x\}$.

For  every subset $A$ of $X$ let $\sV_A=\bigcup\{\sV_x:x\in A\}$,
$\sU_A=\bigcup\{\sU_x:x\in  A\}$  and  $\sU_A(V)=\{U:U\subset  V,
U\in\sU_x, x\in A\cap V\}$ whenever $V$ is an open subset of $X$.
Let also  $\mathfrak{W}_A=\{\sW:|\sW|\le\kappa, \sW\subset \sV_A,
X\setminus\bigcup\{\overline{V}:V\in\sW\}\ne\emptyset\}$,
$\mathfrak{O}_A=\{\sO:|\sO|\le\kappa,  \sO\subset  \sU_A\}$   and
$\mathfrak{P}_A=\{\sP:|\sP|\le\kappa,  \sP\subset \mathfrak{O}_A,
(\bigcap\{\overline{\bigcup\sO}:\sO\in\sP\})\setminus
A\ne\emptyset\}$. For each $\sW\in\mathfrak{W}_A$ we pick a point
$p_A(\sW)\in  X\setminus\bigcup\{\overline{V}:V\in\sW\}$  and for
each  $\sP\in\mathfrak{P}_A$  we  pick   a   point   $q_A(\sP)\in
(\bigcap\{\overline{\bigcup\sO}:\sO\in\sP\})\setminus A$.

Now let $F_0=\{z\}$ where $z$ is an  arbitrary  point  in  $X$.  
Recursively  we construct a family $\{F_\eta:\eta<\kappa^+\}$ of
subsets of $X$ as follows:   
\begin{itemize}  
   \item[(i)]  If $\eta<\kappa^+$ is  a      limit   ordinal   then
                   $F_\eta={\bigcup\{F_\xi:\xi<\eta\}}$; 
   \item[(ii)]  If $\eta=\xi+1$ then    $F_\eta=    {F_\xi   \cup   
                  \{p_{F_\xi}(\sW)   :\sW   \in
                  \mathfrak{W}_{F_\xi}\}   \cup  \{q_{F_\xi}(\sP):
                  \sP\in\mathfrak{P}_{F_\xi}\}}$. 
\end{itemize}

Observe first  that  $|F_0|\le  \tau^\kappa$;  we  will  prove by
transfinite induction that  $|F_\eta|\le  \tau^\kappa$  for  each
$\eta<\kappa^+$.  Assume  that   for  each  $\xi<\eta$,  where
$\eta<\kappa^+$, we have $|F_\xi|\le \tau^\kappa$. If $\eta$ is a
limit              ordinal              then              clearly
$|\bigcup\{F_\xi:\xi<\eta\}|\le\tau^\kappa$.        Now       let
$\eta=\alpha+1$.  Since  $|F_\alpha|\le  \tau^\kappa$,  we   have
$|\{p_{F_\alpha}(\sW):\sW\in\mathfrak{W}_{F_\alpha}\}|\le
(\kappa\cdot\tau^\kappa)^\kappa=\tau^\kappa$                  and
$|\{q_{F_\alpha}(\sP):\sP\in\mathfrak{P}_{F_\alpha}\}|\le
((\tau\cdot\tau^\kappa)^\kappa)^\kappa=\tau^\kappa$.    Then   it
follows from (ii) that $|F_\eta|\le\tau^\kappa$.

It is  clear  that  the set $F={\bigcup\{F_\eta:\eta<\kappa^+\}}$
has cardinality not exceeding $\tau^\kappa$. We will first  prove
that  $F$  is  a  closed  set  in  $X$;  this  fact  will be used
afterwards to show that $F=X$.

Suppose  that  $F$   is   not   closed.   Then   there  is  $x\in
\overline{F}\setminus  F$.  Let  $V\in\sV_x$.  If  $V'$  is   any
neighborhood of $x$ then $V'\cap V$ is a non-empty open subset of
$X$ and therefore there is $y\in F\cap V'\cap V$ and $U\in \sU_y$
such  that  $U\subset  V'\cap  V$.  This  shows  that  for  every
$V\in\sV_x$ we have $x\in \overline{\bigcup \sU_F(V)}$. Therefore
there  exists $\sO_V\subset\sU_F(V)$ such that $|\sO_V|\le\kappa$
and         $x\in         \overline{\bigcup\sO_V}$.         Since
$\bigcup\sO_V\subset\bigcup\sU_F(V)\subset     V$,   we  have
$\overline{\bigcup\sO_V}\subset\overline{V}$.           Therefore
$\{x\}=\bigcap\{\overline{\bigcup\sO_V}:V\in\sV_x\}$.   For   any
$V\in\sV_x$ and $U\in\sO_V$ choose a point $y=y(V,U)\in F$ such
that  $U\in\sU_y$.  It  is  clear that the cardinality of the set
$D=\{y(V,U):V\in\sV_x$   and   $U\in\sO_V\}$   does   not  exceed
$\kappa$. Therefore there is $\xi<\kappa^+$ such  that  $D\subset
F_\xi$.  Hence,  $\sO_V\in  \mathfrak{O}_{F_\xi}$  whenever $V\in
\sV_x$.     If     $\sP=\{\sO_V:V\in\sV_x\}$     then     $\sP\in
\mathfrak{P}_{F_\xi}$ and  clearly  $x=q_{F_\xi}(\sP)$. Therefore
$x\in F_{\xi+1}\subset F$, which is a  contradiction.  The  proof
that $F$ is closed is complete.

Now  suppose that there is $x\in X\setminus F$. For each $y\in F$
let  $V_y\in\sV_y$  be  such  that  $x\notin\overline{V_y}$. Then
$\sV=\{V_y:y\in F\}$ is an open cover of $F$. Thus, there  exists
$\sV'\subset\sV$      such     that     $|\sV'|\le\kappa$     and
$F\subset\bigcup\{\overline{V_y}:V_y\in\sV'\}$.  Clearly $x\notin
\bigcup\{\overline{V_y}:V_y\in\sV'\}$. Let  $C=\{y:V_y\in\sV'\}$.
Then  $|C|\le\kappa$  and  therefore there is $\xi<\kappa^+$ such
that             $C\subset              F_\xi$.             Since
$X\setminus\bigcup\{\overline{V_y}:V_y\in\sV'\}\ne\emptyset$,  we
have $p_{F_\xi}(\sV')\notin \bigcup\{\overline{V_y}:V_y\in\sV'\}$
and  at  the  same  time   $p_{F_\xi}(\sV')\in   F_{\xi+1}\subset
F\subset        \bigcup\{\overline{V_y}:V_y\in\sV'\}$.       This
contradiction completes the proof. 
\end{proof}

\begin{corollary}
For   every    Hausdorff    space    $X$    we    have   
$$|X|\le \pi\chi(X)^{\ot(X)\cdot\psi(X)\cdot L(X)}.$$ 
\end{corollary}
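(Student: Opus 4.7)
My plan is to derive this as a direct consequence of Theorem~\ref{TGT2}, which already supplies the bound $|X|\le\pi\chi(X)^{\ot(X)\cdot\psi_c(X)\cdot aL_c(X)}$. All that remains is to replace $\psi_c$ by $\psi$ and $aL_c$ by $L$ in the exponent, and this is where the Hausdorff hypothesis enters. Since $\pi\chi(X)\ge\omega$, the right-hand side is monotone in the exponent, so it suffices to show
\[
\psi_c(X)\cdot aL_c(X)\le \psi(X)\cdot L(X).
\]

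The first of the two required comparisons is standard and separation-free: from any open cover $\sV$ of a closed set $F\subset X$ the family $\sV\cup\{X\setminus F\}$ is an open cover of $X$, so by $L(X)\le\lambda$ we can extract a subfamily of size $\le\lambda$ whose elements (and hence whose closures) already cover $F$; this gives $aL_c(X)\le L(X)$. The second comparison is the classical inequality $\psi_c(X)\le \psi(X)\cdot L(X)$ for Hausdorff spaces. To prove it, fix $x\in X$ and a pseudobase $\{U_\alpha:\alpha<\psi(X)\}$ with $\bigcap_\alpha U_\alpha=\{x\}$. For each $\alpha$ and each $y\in X\setminus U_\alpha$, use Hausdorffness to pick disjoint open sets $V_{\alpha,y}\ni x$ and $W_{\alpha,y}\ni y$; the family $\{W_{\alpha,y}\}$ covers the closed set $X\setminus U_\alpha$, and extending it by $U_\alpha$ yields an open cover of $X$ from which $L(X)$ produces a subcover indexed by some $\{y^\alpha_\xi:\xi<L(X)\}$. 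Since $V_{\alpha,y^\alpha_\xi}\cap W_{\alpha,y^\alpha_\xi}=\emptyset$, we have $\overline{V_{\alpha,y^\alpha_\xi}}\subset X\setminus W_{\alpha,y^\alpha_\xi}$, so any $z\ne x$, which lies in some $W_{\alpha,y^\alpha_\xi}$, is excluded from the corresponding closure. Consequently $\bigcap_{\alpha,\xi}\overline{V_{\alpha,y^\alpha_\xi}}=\{x\}$, and this family has size $\le\psi(X)\cdot L(X)$, establishing the claim.

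Combining the two comparisons gives $\psi_c(X)\cdot aL_c(X)\le \psi(X)\cdot L(X)\cdot L(X)=\psi(X)\cdot L(X)$, so
\[
|X|\le\pi\chi(X)^{\ot(X)\cdot\psi_c(X)\cdot aL_c(X)}\le\pi\chi(X)^{\ot(X)\cdot\psi(X)\cdot L(X)},
\]
which is the desired corollary. The only nontrivial step is the Hausdorff lemma $\psi_c\le\psi\cdot L$; its verification relies on no property stronger than $T_2$, so the hypothesis of the corollary suffices.
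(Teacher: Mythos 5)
Your proposal is correct and follows essentially the same route as the paper: both deduce the corollary from Theorem~\ref{TGT2} together with the inequalities $aL_c(X)\le L(X)$ and $\psi_c(X)\le\psi(X)\cdot L(X)$ for Hausdorff spaces. The only difference is that the paper cites the latter fact from Juh\'asz's book (2.9(c)), whereas you supply a correct self-contained proof of it.
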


\begin{proof}
The claim follows directly from Theorem \ref{TGT2} and  the  fact
that  if  $X$ is a Hausdorff space then $\psi_c(X)\le\psi(X)\cdot
L(X)$ (see \cite[2.9(c)]{Juh80}). 
\end{proof}

\begin{corollary}
If  $X$  is  a  Urysohn  space,  then \begin{equation}\label{Eq6}
|X|\le       \pi\chi(X)^{\ot(X)\cdot\psi(X)\cdot        aL_c(X)}.
\end{equation} 
\end{corollary}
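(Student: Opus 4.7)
The plan is to deduce this corollary from Theorem~\ref{TGT2} by establishing the auxiliary inequality $\psi_c(X)\le\psi(X)\cdot aL_c(X)$ for every Urysohn space $X$. Once this is in place, applying Theorem~\ref{TGT2} (which requires only that $X$ be Hausdorff, and Urysohn implies Hausdorff) yields
$$|X|\le\pi\chi(X)^{\ot(X)\cdot\psi_c(X)\cdot aL_c(X)}\le\pi\chi(X)^{\ot(X)\cdot\psi(X)\cdot aL_c(X)\cdot aL_c(X)}=\pi\chi(X)^{\ot(X)\cdot\psi(X)\cdot aL_c(X)},$$
which is exactly (\ref{Eq6}).

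To prove the auxiliary inequality, I would fix a point $x\in X$ and write $\lambda=\psi(X)$, $\mu=aL_c(X)$. Choose a family $\{U_\alpha:\alpha<\lambda\}$ of open neighborhoods of $x$ with $\bigcap_{\alpha<\lambda}U_\alpha=\{x\}$. For each $\alpha<\lambda$ the set $F_\alpha=X\setminus U_\alpha$ is closed in $X$ and does not contain $x$. Using the Urysohn property, for every $y\in F_\alpha$ select open sets $V_{y,\alpha}\ni x$ and $W_{y,\alpha}\ni y$ with $\overline{V_{y,\alpha}}\cap\overline{W_{y,\alpha}}=\emptyset$. The family $\{W_{y,\alpha}:y\in F_\alpha\}$ covers the closed set $F_\alpha$, so by the definition of $aL_c(X)$ there is a subset $A_\alpha\subset F_\alpha$ with $|A_\alpha|\le\mu$ and $F_\alpha\subset\bigcup\{\overline{W_{y,\alpha}}:y\in A_\alpha\}$.

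I would then consider the family $\mathcal{V}_x=\{V_{y,\alpha}:\alpha<\lambda,\ y\in A_\alpha\}$ of open neighborhoods of $x$ of cardinality at most $\lambda\cdot\mu$. To verify that $\bigcap\{\overline{V}:V\in\mathcal{V}_x\}=\{x\}$, take any $z\ne x$; then $z\notin U_\alpha$ for some $\alpha$, so $z\in F_\alpha$, hence $z\in\overline{W_{y,\alpha}}$ for some $y\in A_\alpha$, and therefore $z\notin\overline{V_{y,\alpha}}$ because $\overline{V_{y,\alpha}}$ and $\overline{W_{y,\alpha}}$ are disjoint. This establishes $\psi_c(x,X)\le\lambda\cdot\mu$, and taking the supremum over $x$ gives $\psi_c(X)\le\psi(X)\cdot aL_c(X)$.

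I do not expect any serious obstacle here: the argument is essentially a standard Urysohn-plus-almost-Lindelöf shrinking, and the bookkeeping is routine. The only point deserving care is to invoke the Urysohn separation (not merely Hausdorffness) when producing the pairs $V_{y,\alpha}, W_{y,\alpha}$ with disjoint closures, since this is what allows the cover of $F_\alpha$ by closures of the $W_{y,\alpha}$'s to exclude $z$ from the closures of the corresponding $V_{y,\alpha}$'s.
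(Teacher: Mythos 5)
Your proposal is correct and follows the same route as the paper: apply Theorem~\ref{TGT2} together with the inequality $\psi_c(X)\le\psi(X)\cdot aL_c(X)$ for Urysohn spaces. The only difference is that the paper simply cites this auxiliary inequality (Hodel, Lemma~2.1 of the cited reference) while you prove it directly, and your argument for it is the standard correct one.
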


\begin{proof}
The inequality follows directly from Theorem~\ref{TGT2} and the fact
that if $X$  is  a  Urysohn  space then $\psi_c(X)\le\psi(X)\cdot
aL_c(X)$ (see \cite[Lemma~2.1]{Hodel06}). 
\end{proof}

\begin{corollary}\label{CGT4}
If   $X$   is   a    Hausdorff    space    with    $\pi\chi(X)\le
2^{\ot(X)\cdot\psi_c(X)\cdot      aL_c(X)}$      then     $|X|\le
2^{\ot(X)\cdot\psi_c(X)\cdot aL_c(X)}$. 
\end{corollary}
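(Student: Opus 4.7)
The plan is to derive this as a direct consequence of Theorem~\ref{TGT2} together with a short cardinal-arithmetic computation, so there is essentially no obstacle beyond bookkeeping. First I would abbreviate $\kappa=\ot(X)\cdot\psi_c(X)\cdot aL_c(X)$; this is an infinite cardinal by the conventions adopted in Section~2. Theorem~\ref{TGT2} then reads $|X|\le \pi\chi(X)^{\kappa}$.

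Next I would invoke the hypothesis $\pi\chi(X)\le 2^{\kappa}$ to substitute into the right-hand side, obtaining
\[
|X|\le \pi\chi(X)^{\kappa}\le \bigl(2^{\kappa}\bigr)^{\kappa}=2^{\kappa\cdot\kappa}=2^{\kappa},
\]
where the last equality uses that $\kappa$ is infinite so $\kappa\cdot\kappa=\kappa$. This is exactly the desired bound $|X|\le 2^{\ot(X)\cdot\psi_c(X)\cdot aL_c(X)}$.

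No separation-axiom hypothesis beyond the Hausdorffness already required by Theorem~\ref{TGT2} is needed, and the hypothesis $\pi\chi(X)\le 2^{\kappa}$ is exactly what is required to absorb the $\pi$-character factor in the exponential bound. Since the argument is essentially a two-line calculation, the only care required is making sure the cardinal $\kappa$ is correctly identified as infinite so that $\kappa\cdot\kappa=\kappa$; this is built into the definitions of $\ot$, $\psi_c$, and $aL_c$ given in Section~2.
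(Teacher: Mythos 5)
Your proof is correct and is exactly the argument the paper intends: the corollary is stated without proof because it follows from Theorem~\ref{TGT2} by precisely the substitution $|X|\le\pi\chi(X)^{\kappa}\le(2^{\kappa})^{\kappa}=2^{\kappa}$ that you carry out. Nothing further is needed.
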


We  present  next an example  which  shows that Theorem~\ref{TGT2} 
indeed improves the theorem of Willard and Dissanayake.   
Recall that for a Tychonoff space $X$, the expression $C_p(X)$ stands  
for  the  set  of  all real-valued   continuous   functions  on  $X$  endowed  
with the pointwise convergence topology.   For the  basic facts about the
spaces $C_p(X)$ we refer the reader to the book \cite{Tk11}.

\begin{example}
\rm
Apply Theorem~2.1 of \cite{OT96} to see that there exists a 
Tychonoff space $Z$ with the following properties:

{
\smallskip
\noindent\hangafter=1 \hangindent=.3 in\rlap{(i)}\hskip.3in
$Z=D\cup A\cup\{p\}$ where  $D$  is  a countable dense set of
isolated points of $Z$ while the sets $A,D$ and $\{p\}$ are disjoint;

\noindent\hangafter=1 \hangindent=.3 in\rlap{(ii)}\hskip.3in
the  subspace  $A\cup\{p\}$  is  compact and $p$ is its 
unique non-isolated point;

\noindent\hangafter=1 \hangindent=.3 in\rlap{(iii)}\hskip.3in
$|A|=\cc$ and the space $C_p(Z)$ is Lindel\"of.

}

\smallskip
Observe first  that  the  space  $Z$  is  separable and therefore
$C_p(Z)$  has  a  weaker  second  countable  topology   and,   in
particular, $\psi(C_p(Z))=\omega$ (see \cite[Problem~173]{Tk11}).
Take  a set $A_0\subset A$ such that $|A_0|=|A\setminus A_0|=\cc$
and let $u\in \RR^Z$ be the function such that $u(z)=0$ for all 
$z\in  A_0$ and $u(z)=1$ whenever $z\in Z\setminus A_0$; 
it is clear that $u\in \RR^Z \setminus C_p(Z)$.
We will prove even more, namely, that

\smallskip\noindent
$(*)$ \ if  $Q\subset  C_p(Z)$   and  $|Q|<\cc$,  then  $u  \notin
\overline Q$ (the closure is taken in $\RR^Z$).

\smallskip
To verify $(*)$ note first that for every $f\in Q$ there exists a
countable set $A_f\subset A$ such that $f(z)=f(p)$ for any  $z\in
A\setminus  A_f$.  The  cardinality of the set $A'= \bigcup\{A_f:
f\in Q\}$ is strictly  less  than  $\cc$  so we can find 
points  $z_0\in  A_0\setminus  A'$  and  $z_1\in (A\setminus A_0)
\setminus  A'$.   It  is  straightforward  that  $U=\{g\in \RR^Z:
|g(z_0)|<  \frac13$  and  $|g(z_1)-1|<  \frac13\}$  is  an   open
neighborhood of $u$  in  $\RR^Z$  such  that $U\cap Q=\emptyset$;
this settles $(*)$.

Take  a countable dense subset $E$ in the space $\RR^Z$ such that
$u\in  E$.   Then  $X=C_p(Z)\cup  E$  is  a  separable Lindel\"of
subspace of $\RR^Z$ and it follows from  $(*)$  that  $t(X)=\cc$.
Besides,     $\psi(C_p(Z))=\omega$     easily     implies    that
$\psi(X)=\omega$.  The space  $X$  is  Tychonoff  and  Lindel\"of so  
we have $\psi_c(X)= \psi(X) =\omega$ and $aL_c(X)= L(X)=\omega$.  
Since $X\subset\RR^Z$ and $|Z|\le\cc$, we have $w(X)\le\cc$. Hence
$\pi\chi(X)\le\cc$. Also, since $X$ is separable,  we have $\ot(X)=\omega$. 
Therefore the formula
$$\pi\chi(X)^{\ot(X)\cdot \psi_c(X)\cdot aL_c(X)} 
\le\cc^\omega=\cc <2^{\cc} =
2^{t(X)\cdot \psi_c(X)\cdot \pi\chi(X)\cdot  aL_c(X)}$$
witnesses that Theorem~\ref{TGT2}  is  strictly  stronger than the
inequality of Willard and Dissanayake.
\end{example}

The result in Theorem~\ref{TGT2}  should  be  compared  with
\v{S}apirovskii's inequality

\begin{equation}\label{Eq5}
|X|\le\pi\chi(X)^{c(X)\cdot\psi(X)}.  
\end{equation}   

\noindent
proved  in
\cite{Sap74} for  regular  spaces.  If  $X$  is  the  Kat\v{e}tov
extension $\kappa{\hskip1pt}\omega$ of the set $\omega$ with 
the discrete topology,     then    $X$  is       Urysohn    and
$\pi\chi(X)=c(X)=\psi(X)=\omega$  while  $|X|=2^{\mathfrak{c}}$;
this  shows  that \v{S}apirovskii's inequality is not true if we drop 
the regularity of $X$. The inequality (\ref{Eq6}) shows one of the
possible ways to find a statement analogous to  (\ref{Eq5})  that
holds for Urysohn spaces.

Our next result will allow us to obtain a direct strengthening of
(\ref{Eq5}) for Hausdorff spaces.

\begin{theorem}\label{TG1}
If  $X$   is   a   Hausdorff   space   and   $A\subset   X$  then
$$|\overline{A}|\le
(\pi\chi(X)\cdot|A|)^{\ot(X)\cdot\psi_c(X)}.$$ 
\end{theorem}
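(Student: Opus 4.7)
The plan is to adapt the proof of Theorem~\ref{TGT2}, exploiting the fact that here the approximating set $A$ is given explicitly, so no closing-off recursion is needed. Set $\tau=\pi\chi(X)\cdot|A|$ and $\kappa=\ot(X)\cdot\psi_c(X)$; the goal is to produce an injection from $\overline{A}$ into a set of cardinality at most $\tau^\kappa$.

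First, for each $a\in A$ I fix a local $\pi$-base $\sU_a$ at $a$ with $|\sU_a|\le\pi\chi(X)$, and for each $x\in\overline{A}$ a family $\sV_x$ of open neighborhoods of $x$ with $|\sV_x|\le\psi_c(X)$ and $\bigcap\{\overline{V}:V\in\sV_x\}=\{x\}$. Let $\sU_A=\bigcup\{\sU_a:a\in A\}$, so $|\sU_A|\le\tau$. For each $x\in\overline{A}$ and $V\in\sV_x$ put $\sU_A(V)=\{U\in\sU_a:a\in A\cap V,\ U\subset V\}$. The key density claim is that $x\in\overline{\bigcup\sU_A(V)}$: given any open neighborhood $V'$ of $x$, the set $V'\cap V$ is a neighborhood of $x\in\overline{A}$ and hence contains a point $a\in A$; then the local $\pi$-base $\sU_a$ supplies some $U\in\sU_a$ with $U\subset V'\cap V$, and this $U$ lies in $\sU_A(V)$ and meets $V'$.

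Using $\ot(X)\le\kappa$, for each such $V$ I can thin $\sU_A(V)$ down to a family $\sO_V\subset\sU_A(V)$ with $|\sO_V|\le\kappa$ and still $x\in\overline{\bigcup\sO_V}$. Define $\Phi\colon\overline{A}\to [\,[\sU_A]^{\le\kappa}\,]^{\le\kappa}$ by $\Phi(x)=\{\sO_V:V\in\sV_x\}$. Since $\bigcup\sO_V\subset V$, we have $\overline{\bigcup\sO_V}\subset\overline{V}$, so $\bigcap_{\sO\in\Phi(x)}\overline{\bigcup\sO}\subset\bigcap_{V\in\sV_x}\overline{V}=\{x\}$; together with $x\in\overline{\bigcup\sO_V}$ for every $V\in\sV_x$, this forces $\bigcap_{\sO\in\Phi(x)}\overline{\bigcup\sO}=\{x\}$. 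Hence $\Phi(x)$ determines $x$, so $\Phi$ is injective.

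A routine computation bounds the codomain: $|[\sU_A]^{\le\kappa}|\le\tau^\kappa$, and therefore $|\Phi(\overline{A})|\le(\tau^\kappa)^\kappa=\tau^\kappa$, yielding $|\overline{A}|\le(\pi\chi(X)\cdot|A|)^{\ot(X)\cdot\psi_c(X)}$. The only step requiring genuine attention is the density claim above, where the $\pi$-base hypothesis at points of $A$ must be combined with the assumption $x\in\overline{A}$; the rest is the counting machinery from the proof of Theorem~\ref{TGT2}, stripped of the transfinite construction because the set $A$ itself plays the role that the closed set $F$ played there.
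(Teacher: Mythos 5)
Your argument is correct and is essentially the paper's own proof: the same choice of local $\pi$-bases at points of $A$ and closed pseudo-bases at points of $\overline{A}$, the same density claim for $\sU_A(V)$, the same thinning via o-tightness, and the same identification of each $x\in\overline{A}$ as the intersection $\bigcap_V\overline{\bigcup\sO_V}$. The only cosmetic difference is that you merge $\ot(X)$ and $\psi_c(X)$ into a single exponent $\kappa$ and phrase the final count as an explicit injection $\Phi$, whereas the paper keeps the two cardinals separate and counts the possible intersections directly.
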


\begin{proof}
Let  $\psi_c(X)=\nu$,  $\ot(X)=\mu$ and $\pi{\chi}(X)=\tau$.  For
each $x\in X$ choose  a  local $\pi$-base $\sU_x$ at the point
$x$  with  $|\sU_x|\leq  \tau$  and  a  family  $\sV_x$  of  open
neighborhoods of $x$ such that $\{x\}=\bigcap\{ \overline V: V\in
\sV_x\}$ and $|\sV_x|\leq \nu$; let $\sU=\bigcup\{\sU_x: x\in A\}$.

For  any  $x\in  \overline{A}$ and $V\in\sV_x$ the cardinality of
the  family  $\sU_A(V)=\{U: U\subset  V,  U\in  \sU_y,  y\in A\cap
V\}\subset \sU$ does not exceed $\tau\cdot|A|$. If  $V'$  is  any
neighborhood of $x$ then $V'\cap V$ is a non-empty open subset of
$X$ and therefore there is $y\in A\cap V'\cap V$ and $U\in \sU_y$
such that  $U\subset  V'\cap  V$.   This  shows  that  for  every
$V\in\sV_x$  we have $x\in \overline{\bigcup\sU_A(V)}$. Therefore
for every  $V\in\sV_x$  there  exists $\sO_V\subset\sU_A(V)$ such
that $|\sO_V|\le\mu$ and $x\in \overline{\bigcup\sO_V}$.

Since $\bigcup\sO_V \subset \bigcup\sU_A(V)\subset  V$,  we  have
the inclusion  $\overline{\bigcup\sO_V}  \subset  \overline{V}$. 
Therefore $\{x\}=\bigcap\{\overline{\bigcup \sO_V}: V\in\sV_x\}$.
Observing that $|\sU|\leq \tau\cdot |A|$ and every $\sO_V$  is  a
subfamily of $\sU$ of cardinality $\leq \mu$ we conclude that the
cardinality  of  the  collection $\sO=\{\sO_V:  V\in \sV_x,\ x\in
A\}$ does not exceed ${(\tau\cdot|A|)}^{\mu}$. Hence there are at
most    $({{(\tau\cdot|A|)}^\mu)}^\nu$-many    intersections   of
$\nu$-many elements of $\sO$.  We already saw that every point of
$\overline A$ \ is an intersection of $\nu$-many elements of $\sO$
so  $|\overline{A}|\le{(\tau\cdot|A|)}^{\mu\cdot\nu}$. 
\end{proof}

\begin{corollary}\label{CGT2}
If   $X$   is   a   Hausdorff   space,   then  $|\overline{A}|\le
\pi\chi(X)^{\ot(X)\cdot\psi_c(X)}$  whenever  $A\subset  X$ and
$|A|\le\pi\chi(X)^{\ot(X)\cdot\psi_c(X)}$. 
\end{corollary}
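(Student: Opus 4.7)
The plan is to apply Theorem~\ref{TG1} directly and then simplify using standard cardinal arithmetic; no new topological input is required. Set $\kappa=\pi\chi(X)^{\ot(X)\cdot\psi_c(X)}$ for brevity, and let $\lambda=\ot(X)\cdot\psi_c(X)$, which is an infinite cardinal by definition.

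First, I would verify that $\pi\chi(X)\cdot|A|\le\kappa$. Since $\lambda\ge\omega\ge 1$, we have $\pi\chi(X)\le\pi\chi(X)^\lambda=\kappa$, and $|A|\le\kappa$ holds by hypothesis. Therefore $\pi\chi(X)\cdot|A|\le\kappa$. Applying Theorem~\ref{TG1} to the set $A$ then gives
\[
|\overline{A}|\le(\pi\chi(X)\cdot|A|)^{\lambda}\le\kappa^{\lambda}.
\]

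Finally, I would collapse the exponent using the identity $\lambda\cdot\lambda=\lambda$ for infinite cardinals:
\[
\kappa^{\lambda}=\bigl(\pi\chi(X)^{\lambda}\bigr)^{\lambda}=\pi\chi(X)^{\lambda\cdot\lambda}=\pi\chi(X)^{\lambda}=\kappa.
\]
Combining the two displays yields $|\overline{A}|\le\pi\chi(X)^{\ot(X)\cdot\psi_c(X)}$, as required. There is no genuine obstacle here; the corollary is essentially a ``fixed-point'' restatement of Theorem~\ref{TG1} that sidesteps the $|A|$ factor, and the only thing to check carefully is that $\lambda$ is infinite so that the cardinal arithmetic $\lambda\cdot\lambda=\lambda$ applies, which is immediate from the definitions of $\ot$ and $\psi_c$.
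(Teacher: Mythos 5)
Your proof is correct and follows exactly the route the paper intends: the corollary is stated without proof as an immediate consequence of Theorem~\ref{TG1}, and your application of that theorem together with the cardinal arithmetic $\lambda\cdot\lambda=\lambda$ for the infinite exponent $\lambda=\ot(X)\cdot\psi_c(X)$ is precisely the intended argument.
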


\begin{corollary}\label{CGT1}
Assume  that  $X$ is a Hausdorff space and $A\subset X$. Then
$$|\overline{A}|\le(\pi\chi(\overline{A})\cdot
|A|)^{\ot(\overline{A})\cdot\psi_c(\overline{A})}.$$
\end{corollary}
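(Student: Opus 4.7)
The plan is to deduce Corollary~\ref{CGT1} directly from Theorem~\ref{TG1} by applying that theorem in the subspace $\overline{A}$ rather than in $X$ itself. Set $Y=\overline{A}$ with the subspace topology inherited from $X$. Since $X$ is Hausdorff, $Y$ is also Hausdorff, so the hypotheses of Theorem~\ref{TG1} are satisfied for $Y$ together with the subset $A\subset Y$.

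The next step is to check that the closure of $A$ inside the subspace $Y$ is all of $Y$. This is immediate: the closure of $A$ in $Y$ equals $\overline{A}^X\cap Y=Y$, because $Y=\overline{A}^X\supseteq A$. Therefore Theorem~\ref{TG1} applied to the pair $(Y,A)$ yields
$$|Y|\le(\pi\chi(Y)\cdot|A|)^{\ot(Y)\cdot\psi_c(Y)},$$
which is exactly the claimed inequality
$$|\overline{A}|\le(\pi\chi(\overline{A})\cdot|A|)^{\ot(\overline{A})\cdot\psi_c(\overline{A})}.$$

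There is essentially no obstacle here; the only thing to notice is the conceptual point that the cardinal invariants $\pi\chi$, $\ot$, and $\psi_c$ on the right-hand side refer to the subspace $\overline{A}$, and Theorem~\ref{TG1} is flexible enough to be applied in any Hausdorff space, not only in the ambient space in which one started. So the corollary is a formal relativization of Theorem~\ref{TG1} to the closure of $A$.
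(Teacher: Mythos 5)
Your proof is correct and is exactly the intended argument: the paper states Corollary~\ref{CGT1} without proof as an immediate consequence of Theorem~\ref{TG1}, obtained by applying that theorem to the Hausdorff subspace $\overline{A}$ in which $A$ is dense. Nothing is missing.
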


\begin{corollary}\label{CGT}
For every Hausdorff  space  $X$  we have $$|X|\le(\pi\chi(X)\cdot
d(X))^{\ot(X)\cdot\psi_c(X)}.$$ 
\end{corollary}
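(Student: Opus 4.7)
The plan is to obtain Corollary~\ref{CGT} as an immediate consequence of Theorem~\ref{TG1} by specializing the subset $A$ to be a dense subset of $X$ whose cardinality realizes the density.

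Concretely, I would fix a dense set $A\subset X$ with $|A|\le d(X)$; such a set exists by the definition of $d(X)$ as the minimum infinite cardinal $\kappa$ for which $X$ has a dense subset of cardinality at most $\kappa$. Since $A$ is dense, $\overline{A}=X$, so applying Theorem~\ref{TG1} to this particular $A$ gives
$$|X|=|\overline{A}|\le (\pi\chi(X)\cdot|A|)^{\ot(X)\cdot\psi_c(X)}\le(\pi\chi(X)\cdot d(X))^{\ot(X)\cdot\psi_c(X)},$$
which is exactly the stated inequality.

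There is no genuine obstacle in this argument; all of the technical work has already been carried out in the proof of Theorem~\ref{TG1}, where the key ingredients are a local $\pi$-base at each point, the family $\sV_x$ witnessing $\psi_c$, and the o-tightness assumption used to extract small subfamilies whose unions still have $x$ in their closure. The only thing worth emphasizing is the motivation for stating the corollary: once combined with Charlesworth's inequality $d(X)\le\pi\chi(X)^{c(X)}$, which holds in the class of regular spaces, the corollary recovers \v{S}apirovski{\u\i}'s inequality $|X|\le\pi\chi(X)^{c(X)\cdot\psi(X)}$ for regular spaces, while being itself valid for all Hausdorff spaces.
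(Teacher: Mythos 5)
Your proposal is correct and is exactly the derivation the paper intends: Corollary~\ref{CGT} is stated without proof immediately after Theorem~\ref{TG1}, and the obvious specialization to a dense set $A$ with $|A|\le d(X)$ (so that $\overline{A}=X$) is the whole argument. Nothing is missing.
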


Observe  that  Corollary~\ref{CGT}   implies   \v{S}apirovskii's
inequality  (\ref{Eq5})  because  for  every  space  $X$  we have
$\ot(X)\le      c(X)$      while      $\psi_c(X)=\psi(X)$     and
$d(X)\le\pi\chi(X)^{c(X)}$ whenever $X$ is a regular  space  (see
\cite{Cha77}).

\begin{corollary}\label{CGT3}
If  $X$ is a Hausdorff space, then $$|X|\le d(X)^{\pi\chi(X)\cdot
\ot(X)\cdot\psi_c(X)}.$$ 
\end{corollary}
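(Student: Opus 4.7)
The plan is to deduce this corollary directly from Corollary~\ref{CGT} by a short cardinal arithmetic manipulation, since that corollary already delivers the bound $|X|\le (\pi\chi(X)\cdot d(X))^{\ot(X)\cdot\psi_c(X)}$. Writing $\tau=\pi\chi(X)$, $\mu=\ot(X)$, $\nu=\psi_c(X)$, and $\delta=d(X)$, the task reduces to showing
\[
(\tau\cdot\delta)^{\mu\cdot\nu}\le \delta^{\tau\cdot\mu\cdot\nu}.
\]

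The key observation is that $\tau\cdot\delta\le\delta^{\tau}$. Since $d(X)$ is infinite by convention, we have $\delta\ge\omega$, and therefore $\delta^{\tau}\ge\omega^{\tau}=2^{\tau}\ge\tau$; combined with the trivial $\delta\le\delta^{\tau}$ this gives $\tau\cdot\delta\le\delta^{\tau}$. Raising both sides to the power $\mu\cdot\nu$ yields
\[
(\tau\cdot\delta)^{\mu\cdot\nu}\le (\delta^{\tau})^{\mu\cdot\nu}=\delta^{\tau\cdot\mu\cdot\nu},
\]
which is exactly $|X|\le d(X)^{\pi\chi(X)\cdot\ot(X)\cdot\psi_c(X)}$.

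Alternatively, one could bypass Corollary~\ref{CGT} and go straight to Theorem~\ref{TG1}: picking a dense subset $A\subset X$ with $|A|=d(X)$ and applying the theorem to $\overline{A}=X$ produces the same starting inequality, after which the identical arithmetic closes the argument. There is no genuine obstacle here, as the substantive content has already been absorbed in Theorem~\ref{TG1}; the only subtlety worth flagging is the standing convention that $d(X)$ is infinite, which is what makes $\delta^{\tau}\ge\tau$ automatic.
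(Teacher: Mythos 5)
Your derivation is correct and is exactly the route the paper intends: Corollary~\ref{CGT3} is stated without proof as an immediate consequence of Corollary~\ref{CGT}, and your cardinal-arithmetic step $\pi\chi(X)\cdot d(X)\le d(X)^{\pi\chi(X)}$ (valid since both cardinals are infinite) is the standard way to pass from $(\pi\chi(X)\cdot d(X))^{\ot(X)\cdot\psi_c(X)}$ to $d(X)^{\pi\chi(X)\cdot\ot(X)\cdot\psi_c(X)}$. Nothing is missing.
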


\begin{theorem}\label{TGT1}
Let   $X$    be    a    Hausdorff    space.    Then   $$|X|\le\pi
w(X)^{\ot(X)\cdot\psi_c(X)}.$$ 
\end{theorem}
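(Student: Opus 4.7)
The plan is to derive this as an immediate corollary of Corollary~\ref{CGT}, using two standard facts relating $\pi w(X)$ to $\pi\chi(X)$ and to $d(X)$.

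First I would record the two well-known inequalities $\pi\chi(X)\le \pi w(X)$ and $d(X)\le \pi w(X)$, valid for any topological space. For the first, observe that if $\sB$ is a $\pi$-base for $X$ with $|\sB|=\pi w(X)$, then for every $x\in X$ and every open neighborhood $U$ of $x$, the defining property of a $\pi$-base produces some $V\in\sB$ with $V\subset U$; thus $\sB$ itself serves as a local $\pi$-base at $x$, which yields $\pi\chi(x,X)\le |\sB|$. For the second, selecting one point from each non-empty member of $\sB$ produces a dense subset of $X$ of cardinality at most $|\sB|$, giving $d(X)\le \pi w(X)$. Both arguments are straightforward and use only the definitions, so no real obstacle arises here.

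Next I would invoke Corollary~\ref{CGT}, which guarantees that for every Hausdorff space $X$ one has $|X|\le (\pi\chi(X)\cdot d(X))^{\ot(X)\cdot\psi_c(X)}$. Combining this with the two inequalities just noted gives
\[
|X|\le (\pi\chi(X)\cdot d(X))^{\ot(X)\cdot\psi_c(X)} \le (\pi w(X)\cdot \pi w(X))^{\ot(X)\cdot\psi_c(X)} = \pi w(X)^{\ot(X)\cdot\psi_c(X)},
\]
which is the desired conclusion. Since the entire argument is a two-line reduction to Corollary~\ref{CGT}, there is no substantive obstacle; the only thing to verify carefully is that the elementary inequalities $\pi\chi(X)\le \pi w(X)$ and $d(X)\le \pi w(X)$ are stated cleanly, so that the reader can see the deduction at a glance.
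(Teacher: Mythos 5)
Your proposal is correct and follows essentially the same route as the paper: the paper also deduces the theorem immediately from Corollary~\ref{CGT}, citing the identity $\pi w(X)=\pi\chi(X)\cdot d(X)$ from Hodel's survey, of which you only need (and correctly verify) the easy half, namely $\pi\chi(X)\le\pi w(X)$ and $d(X)\le\pi w(X)$.
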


\begin{proof}
It follows directly from Corollary~\ref{CGT} and  the  fact  that
for every topological space $X$ we have $\pi w(X)=\pi\chi(X)\cdot
d(X)$ (see \cite[3.8(b)]{Hodel84}). 
\end{proof}

\begin{observation}\label{OV}
{\rm  It  is  natural  to  ask  whether  the  result of Bella and
Cammaroto is stronger than our inequality (\ref{Eq4}). This would
happen if  $2^{t(X)}\le  \pi\chi(X)^{\ot(X)}$  for  any Hausdorff
space $X$. However,  this  is  false  even  for  compact  spaces.
Indeed,  if  X  is the Tychonoff cube $[0, 1]^\mathfrak{c}$, then
$\pi\chi(X) = t(X) = \mathfrak{c}$ and $\ot(X) \le c(X) = \omega$
so  $\pi\chi(X)^{\ot(X)}  =   \mathfrak{c}   <  2^\mathfrak{c}  =
2^{t(X)}$.

If the inequality $\pi\chi(X)^{\ot(X)}\le 2^{t(X)}$ were true for
all Hausdorff spaces, then Theorem~\ref{TGT2}  would  imply  the
inequality  of Bella and Cammaroto. However, this inequality does
not hold either:  Take  $X$  to  be the $\Sigma$-product of
$2^\mathfrak{c}$-many real lines and observe that $t(X) = \omega$
while $\pi\chi(X) = 2^\mathfrak{c}$ so $2^{t(X)} <  \pi\chi(X)  =
\pi\chi(X)^{\ot(X)}$.

To  see  that  Corollary~\ref{CGT4}  gives  new  information, it
suffices to prove that there exists a Hausdorff  space  $X$  such
that  $\pi\chi(X)  >  2^{\ot(X)\cdot\psi_c(X)\cdot aL_c(X)}$.  We
will show that there  are  models  of  ZFC  in which such a space
exists and is even normal.} \
\end{observation}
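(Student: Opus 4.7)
The plan is to exhibit, in a suitable model of ZFC, a normal Hausdorff space $X$ with
$\ot(X)=\psi_c(X)=aL_c(X)=\omega$ and $\pi\chi(X)>\cc$. I would work in a model where $2^{\omega_1}>\cc$ (for example, the Cohen extension of a model of GCH by adjoining $\omega_2$ subsets of $\omega_1$; in that extension $\cc=\omega_1$ and $2^{\omega_1}=\omega_2$). The goal is to realize $\pi\chi(X)\ge\omega_2$ at a distinguished point of $X$, while keeping the three other invariants countable and normality in force.

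The natural target for $X$ is a normal Lindel\"of space of countable pseudocharacter and countable tightness, since Lindel\"of gives $aL_c(X)=\omega$, Hausdorff-plus-Lindel\"of gives $\psi_c(X)=\psi(X)$, countable tightness forces $\ot(X)=\omega$, and Lindel\"of-plus-regular gives normality. One concrete strategy is to distinguish a single point $p\in X$ and design the neighborhood filter $\sF$ at $p$ by using a large combinatorial family on $\omega_1$ provided by the model; the remainder of $X$ is to be a discrete (or almost discrete) set of cardinality $\le 2^{\omega_1}$, so that $X$ itself has reasonable size. The filter $\sF$ must be built so as to simultaneously admit (a) a countable decreasing tower of closed members with trivial pseudo-intersection (forcing $\psi_c(p)=\omega$), and (b) no local $\pi$-base of cardinality $\le\cc$ (forcing $\pi\chi(X,p)>\cc$). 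Condition (a) is delicate because any nhd of $p$ must also be ``large enough'' to keep $X$ Lindel\"of, while (b) demands that the filter be combinatorially rich.

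The main obstacle is exactly the tension between (a) and (b): if $\sF$ is generated by a countable family, then $\pi\chi$ collapses, and if $\sF$ is $\omega_1$-complete (so that $\pi\chi$ can be large), then $\psi_c$ blows up. To overcome this I would not put the large $\pi$-character at a single neighborhood filter but rather let it arise from a tree-like arrangement of filters: choose a family $\{\sF_\alpha:\alpha<\omega_2\}$ of filters on $\omega_1$ with empty countable pseudo-intersections, suitably ``almost disjoint'' so that no collection of fewer than $\omega_2$ filters $\pi$-refines the intersection $\bigcap_\alpha\sF_\alpha$. The point $p$ is then taken to be the diagonal limit, and one verifies that each $\sF_\alpha$ contributes a countable shrinking tower (giving $\psi_c=\omega$) while no $\pi$-base of size $\cc$ can diagonalize against all the $\sF_\alpha$ simultaneously (giving $\pi\chi\ge\omega_2$).

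Once the construction is specified, the verifications are routine: Lindel\"ofness and countable tightness follow by induction on open covers and countable chains using the prescribed filter structure, normality follows from Lindel\"of-plus-regular, and the main inequality $\pi\chi(X)>\cc$ follows from a diagonalization showing that any $\cc$-sized candidate $\pi$-base at $p$ fails against one of the $\sF_\alpha$. The hardest step is arranging the combinatorial family $\{\sF_\alpha\}$ in the chosen model so that it is simultaneously large enough (for $\pi\chi>\cc$), locally count-pseudo-intersected (for $\psi_c=\omega$), and compatible with Lindel\"ofness; for this I would invoke either a Luzin-style family produced by the added Cohen subsets or, alternatively, a combinatorial principle such as $\diamondsuit_{\omega_2}$ holding in the chosen extension.
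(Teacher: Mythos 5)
There is a genuine gap: your proposal correctly identifies the target (a normal Lindel\"of Hausdorff space with $\ot(X)=\psi_c(X)=aL_c(X)=\omega$ and $\pi\chi(X)>\mathfrak{c}$) and the standard reductions (Lindel\"of gives $aL_c(X)\le L(X)$, Hausdorff plus Lindel\"of gives $\psi_c(X)=\psi(X)$, $\ot(X)\le t(X)$, regular plus Lindel\"of gives normality), but the actual construction --- which is the entire mathematical content of the claim --- is never carried out. You describe a ``tree-like arrangement of filters'' and defer the key combinatorics to ``either a Luzin-style family \dots or a combinatorial principle such as $\diamondsuit_{\omega_2}$,'' without specifying the space, so there is nothing to verify. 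Worse, the concrete version you start from (one distinguished point $p$ with the remainder a discrete set $D$ of size $>\mathfrak{c}$) is provably impossible: if $X=D\cup\{p\}$ is Lindel\"of then every neighborhood of $p$ must be co-countable in $D$ (its complement is closed discrete), so any intersection of $\kappa$ neighborhoods of $p$ omits at most $\kappa\cdot\omega$ points of $D$, whence $\psi(p,X)\ge|D|>\mathfrak{c}$ and $\psi_c(X)$ blows up. You sense this ``tension'' but do not resolve it; the resolution cannot be local at a single point.

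The paper resolves the problem globally rather than at one point: it invokes the Hajnal--Juh\'asz consistency result (a model of GCH containing a set $E$ of size $\omega_1$ and a family $\sA$ of $\omega_2$ subsets of $E$ with strong independence-type properties) to obtain a regular, zero-dimensional, \emph{hereditarily} Lindel\"of space $Y$; after deleting the union of all countable open sets one gets $X$ with $\pi w(X)=\omega_2$, and then $|X|=\omega_1$ forces $\pi\chi(X)=\omega_2=2^{\mathfrak{c}}>\mathfrak{c}=2^{\ot(X)\cdot\psi_c(X)\cdot aL_c(X)}$, with all the small invariants countable because $X$ is hereditarily Lindel\"of and regular. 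If you want to complete your approach you would essentially have to reprove such a consistency theorem; citing it (or an equivalent example of a regular hereditarily Lindel\"of space of $\pi$-weight exceeding $\mathfrak{c}$) is the missing step. Note also that your write-up does not address the first two assertions of the Observation (the Tychonoff cube $[0,1]^{\mathfrak{c}}$ and the $\Sigma$-product of $2^{\mathfrak{c}}$ real lines), though those computations are already contained in the statement itself.
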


\begin{theorem}\label{TV}
There is a model of ZFC in which we can find a regular (and hence
normal) hereditarily Lindel\"of space $X$ such that $\pi  w(X)  >
\mathfrak{c}$. 
\end{theorem}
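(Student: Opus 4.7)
By de Groot's inequality $|X|\le 2^{hL(X)}$, every hereditarily Lindel\"of Hausdorff space satisfies $|X|\le\mathfrak{c}$, hence $d(X)\le\mathfrak{c}$; since $\pi w(X)=\pi\chi(X)\cdot d(X)$, the desired inequality $\pi w(X)>\mathfrak{c}$ is equivalent to producing a point $p\in X$ with $\pi\chi(p,X)>\mathfrak{c}$. The plan is therefore to construct, in a specific model of ZFC, a hereditarily Lindel\"of regular space $X$ of cardinality $\mathfrak{c}$ with a distinguished point of large $\pi$-character.

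I would work in the constructible universe $V=L$, so that CH gives $\mathfrak{c}=\omega_1$, GCH gives $2^{\omega_1}=\omega_2$, and $\diamondsuit$ is available at every regular uncountable cardinal. The construction is a transfinite recursion of length $\omega_2$ producing an increasing chain of topologies $\{\tau_\alpha:\alpha<\omega_2\}$ on a fixed set of cardinality $\omega_1$ with a designated point $p$. At each successor stage, the $\diamondsuit_{\omega_2}$-sequence anticipates an $\omega_1$-family $\mathcal F_\alpha$ of non-empty open sets already present in $\tau_\alpha$, and I introduce a new open neighborhood $V_{\alpha+1}$ of $p$ containing no member of $\mathcal F_\alpha$. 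Since under GCH the final topology has at most $\omega_2$ many $\omega_1$-subfamilies, a standard diagonal argument ensures that every such family is ``captured'' by the $\diamondsuit$-sequence and hence fails to be a local $\pi$-base at $p$, yielding $\pi\chi(p,X)\ge\omega_2>\mathfrak{c}$.

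The main obstacle is maintaining hereditary Lindel\"ofness across $\omega_2$ stages of adding open sets, since unrestricted additions easily produce non-Lindel\"of subspaces. The intended resolution is to attach each added neighborhood of $p$ to a rigid combinatorial scaffolding — natural candidates include cones of a Kurepa tree or a carefully chosen ladder system, both of which are available in $V=L$ — so that the overall topology enjoys a countable-chain-condition-like property strong enough to guarantee that every open cover of every subspace has a countable subcover. Regularity is arranged stage by stage by taking neighborhoods with small closures, and the parenthetical ``hence normal'' in the statement is then the standard fact that every regular Lindel\"of space is normal.
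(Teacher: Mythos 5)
There is a genuine gap: your proposal is a plan, not a proof, and the step it leaves open is precisely the entire mathematical content of the theorem. The reduction at the start is fine (by de Groot's inequality $|X|\le 2^{hL(X)}$ a hereditarily Lindel\"of Hausdorff space has $|X|\le\mathfrak c$, so one must produce a point of $\pi$-character exceeding $\mathfrak c$), and the general strategy of diagonalizing against all $\mathfrak c$-sized candidate local $\pi$-bases over a recursion of length $(2^{\mathfrak c})^+$-cofinality is the right spirit. But the recursion is never actually defined: you acknowledge that adding $\omega_2$ new neighborhoods of $p$ threatens hereditary Lindel\"ofness, and your proposed fix --- ``attach each added neighborhood to cones of a Kurepa tree or a carefully chosen ladder system'' --- is not accompanied by any construction or any verification that the resulting topology is hereditarily Lindel\"of, or even regular. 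No argument is given that such scaffolding has the required covering property, and it is not clear that $V=L$ is a model in which the theorem holds at all; you assert it without evidence.

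The paper avoids building anything from scratch: it invokes the Hajnal--Juh\'asz consistency result, which provides (in a forcing extension where GCH holds, not in $L$) a set $E$ of size $\omega_1$ and a family $\sA$ of $\omega_2$ subsets of $E$ with a strong independence-type property (any ``infinite matrix'' of Boolean combinations covers all but countably many points) and a separation property. The subbase $\{A,E\setminus A:A\in\sA\}$ then yields a regular, zero-dimensional, hereditarily Lindel\"of space $Y$ --- this is the hard part, and it is exactly what Hajnal and Juh\'asz proved --- after which showing $\pi w>\mathfrak c$ is a short combinatorial argument: delete the countably many points lying in countable open sets, note that an $\omega_1$-sized $\pi$-base would force some $B$ to sit inside $\omega_2$-many (hence infinitely many distinct) sets $A_n\in\sA$, and derive from the independence property that $B$ must then be countable, a contradiction. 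In short, your outline identifies the right target but supplies no mechanism for the hereditary Lindel\"ofness that makes the theorem nontrivial; as written it cannot be completed without importing a construction of essentially the Hajnal--Juh\'asz type.
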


\begin{proof}
Hajnal and Juh\'asz proved in \cite{HajJuh73} that there exists a
model  of  ZFC  in  which  GCH holds and we can find a set $E$ of
cardinality $\omega_1$ and a family  $\sA$ of subsets of $E$ such
that $|\sA| = \omega_2$ and
\begin{itemize}
   \item[(a)] if $k \in \mathbb{N}$ and $\{A_{nm} : n \in \omega,  1
    \le  m  \le  k\}$  is  a subfamily of $\sA$ such that $A_{nm} \ne
    A_{n'm'}$   whenever   $(n,m)\ne    (n',m')$,    then   the   set
   $E\setminus\bigcup_{n\in\omega} B_n$ is countable  provided  that
   every  $B_n$  is  the intersection $B_n^1\cap\ldots\cap B^k_n$ in
   which the set $B^i_n$  is  either $A_{ni}$ or $E\setminus A_{ni}$
   for each $i \le k$.
   \item[(b)]  for  any  $x  \in  E$  and   countable   $B   \subset
   E\setminus\{x\}$,  there  exists  $A  \in \sA$ such that $x \in A
   \subset E\setminus B$.
\end{itemize}

Let $Y$ be the set $E$  with the topology generated by the family
$\{A, E\setminus A : A \in \sA\}$ as a subbase. It was proved  in
\cite{HajJuh73}   that   $Y$   is   a   regular  zero-dimensional
hereditarily Lindel\"of space. Denote by  $\sC$ the family of all
open countable subsets of $Y$; it is immediate that the set $G =
\bigcup\sC$ is  countable.  We  claim  that  the  space  $X  =  Y
\setminus  G$  is  as  promised; of course, we only need to prove
that $\pi w(X) = \omega_2$.

Striving for a contradiction, assume that there is a family $\sB$
of non-empty open subsets of  $X$  such that $|\sB| \le \omega_1$
and $\sB$ is a $\pi$-base in $X$. Observe first that  it  follows
from   our   choice  of  $X$  that  all  elements  of  $\sB$  are
uncountable. If infinitely many elements of $\sA$ are  countable,
then we can find a faithfully indexed subfamily 
$\sA'  =  \{A_n  :   n \in\omega\}$ of the family $\sA$ whose all
elements are countable. However, this implies that
$E\setminus\bigcup\sA'$ is uncountable which
is a contradiction  with  (a).  
Therefore at most finitely  many  elements of $\sA$ are countable
and hence we can find a family $\sE \subset \sA$ such that $|\sE|
= \omega_2$ and $A \cap X \ne \emptyset$ for all $A \in \sE$.

Observe that $A \cap X$ is a non-empty open subset of $X$ for any
$A \in  \sE$  so  it  follows  from  the  fact  that  $\sB$  is a
$\pi$-base in $X$ and $|\sB| \le \omega_1$ that there is  $B  \in
\sB$  such  that  the  set  $\{A  \in  \sE  :  B \subset A\}$ has
cardinality $\omega_2$. In particular,  we  can find a faithfully
indexed family $\{A_n : n \in\omega\} \subset\sE$  such  that  $B
\subset  A_n$  for  all $n \in \omega$. As a consequence, $B \cap
(E\setminus A_n) = \emptyset$ for  each  $n \in \omega$ and hence
the set  $E\setminus\bigcup\{E\setminus  A_n  :  n  \in  \omega\}
\supset B$ is uncountable which is a contradiction with (a). This
contradiction proves that $\pi w(X) = \omega_2$. 
\end{proof}

\begin{corollary}\label{CV}   
For  the  space  $X$  from  Theorem~\ref{TV} we have  
$\pi\chi(X)  =  2^\mathfrak{c} > \mathfrak{c} =
2^{c(X)\cdot \psi(X)\cdot  L(X)}$  and  therefore  $\pi\chi(X)  >
2^{\ot(X)\cdot\psi_c(X)\cdot aL_c(X)}$. Thus, Corollary~\ref{CGT4}  
gives   new   information,   at  least  consistently.
\end{corollary}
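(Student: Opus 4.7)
The plan is to combine the lower bound on $\pi w(X)$ given by Theorem~\ref{TV} with the GCH assumption of the model to pin down $\pi\chi(X)$, and then observe that all the remaining cardinal invariants of $X$ appearing in the corollary collapse to $\omega$ because $X$ is a regular hereditarily Lindel\"of subspace of $Y$.

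First, I would record that under GCH, which holds in the model from Theorem~\ref{TV}, we have $\mathfrak{c}=\omega_1$ and $2^{\mathfrak{c}}=\omega_2$. Since $X\subset Y$ and $|Y|=\omega_1$ by construction, it follows that $|X|\le\omega_1=\mathfrak{c}$, so in particular $d(X)\le\mathfrak{c}$. Theorem~\ref{TV} gives $\pi w(X)>\mathfrak{c}$, hence $\pi w(X)\ge\omega_2$; the bound $\pi w(X)\le 2^{|X|}\le 2^{\mathfrak{c}}=\omega_2$ then forces $\pi w(X)=\omega_2=2^{\mathfrak{c}}$. Using the identity $\pi w(X)=\pi\chi(X)\cdot d(X)$ (cited in the proof of Theorem~\ref{TGT1}) together with $d(X)\le\mathfrak{c}$, any bound $\pi\chi(X)\le\mathfrak{c}$ would give $\pi w(X)\le\mathfrak{c}$, a contradiction. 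Therefore $\pi\chi(X)=\omega_2=2^{\mathfrak{c}}$.

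Next, I would check that the other cardinal functions appearing in the corollary are all countable. Since $Y$ is hereditarily Lindel\"of and $X\subset Y$, the space $X$ is hereditarily Lindel\"of as well, which gives $L(X)=\omega$ and $c(X)\le hL(X)=\omega$. A standard argument in regular hereditarily Lindel\"of spaces (using that $X\setminus\{x\}$ is Lindel\"of and choosing, for each $y\ne x$, an open $V_y\ni y$ with $x\notin\overline{V_y}$, then extracting a countable subcover) yields $\psi(X)=\omega$. Then $\psi_c(X)\le\psi(X)\cdot L(X)=\omega$ by Juh\'asz 2.9(c) cited earlier, $aL_c(X)\le L(X)=\omega$, and $\ot(X)\le c(X)=\omega$ as observed just before Theorem~\ref{TGT2}.

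Consequently both exponents $c(X)\cdot\psi(X)\cdot L(X)$ and $\ot(X)\cdot\psi_c(X)\cdot aL_c(X)$ equal $\omega$, so the corresponding powers of $2$ equal $\mathfrak{c}$, and the desired chain $\pi\chi(X)=2^{\mathfrak{c}}>\mathfrak{c}=2^{c(X)\cdot\psi(X)\cdot L(X)}\ge 2^{\ot(X)\cdot\psi_c(X)\cdot aL_c(X)}$ follows at once; invoking Corollary~\ref{CGT4} then confirms that Corollary~\ref{CGT4} provides genuinely new information in this model. The only non-bookkeeping step is the verification that $\psi(X)=\omega$ for a regular hereditarily Lindel\"of space, and this is the main (mild) obstacle; everything else is cardinal arithmetic under GCH combined with the conclusion of Theorem~\ref{TV}.
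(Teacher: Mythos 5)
Your proposal is correct and follows essentially the same route as the paper: pin down $\pi\chi(X)=\omega_2=2^{\mathfrak c}$ from $\pi w(X)>\mathfrak c$ via $\pi w(X)\le\pi\chi(X)\cdot d(X)$ and $|X|\le\omega_1$ under GCH, then observe that $c(X)$, $\psi(X)$, $L(X)$, and hence $\ot(X)$, $\psi_c(X)$, $aL_c(X)$ are all countable because $X$ is regular and hereditarily Lindel\"of. The extra details you supply (the exact value $\pi w(X)=\omega_2$ and the verification that $\psi(X)=\omega$) are correct but only make explicit what the paper leaves as standard facts.
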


\begin{proof} If $\pi\chi(X)\le\mathfrak{c} = \omega_1$, then  it
follows  from $|X| = \omega_1$ that $\pi w(X) \le \omega_1$ which
is a contradiction with Theorem \ref{TV}. Therefore $\pi\chi(X) =
\omega_2 = 2^\mathfrak{c}$ so all  that  is  left is to note that
$c(X) = \psi(X) = L(X) =  \omega$  because  $X$  is  hereditarily
Lindel\"of.  Finally, observe that it follows from the regularity
of  $X$  that  $\psi_c(X)  =  \psi(X)$  and  $aL_c(X)  \le L(X)$.
\end{proof}

The following question seems to be  interesting  because if it has
an affirmative answer, the respective statement will be a
simultaneous  generalization  of  Theorem~\ref{TGT2} and   
\v{S}apirovski{\u\i}'s  inequality  (\ref{Eq5})  in  the  class  of
$T_3$-spaces.

\begin{question}\label{Q3}
Is the  inequality  $$|X|\le  \pi\chi(X)^{\ot(X)\cdot\psi(X)\cdot
wL_c(X)}$$ true for every regular space $X$? 
\end{question}

\subsection*{Acknowledgements}
We express our gratitude to the referee for carefully reading our
paper and a stimulating critique.

\end{document}